\newcommand{\si}{\Rightarrow}
\newcommand{\sip}{\Rightarrow_{\prec}}
\newcommand{\precs}{\prec_{\Rightarrow}}
\newcommand{\sci}{>\!\!=}
\newcommand{\scip}{{\sci}_\prec}
\newcommand{\oastp}{\oast_{\npcon}}
\newcommand{\pcono}{\pcon_{\oast}}
\newcommand{\odotd}{\odot_{\nppcon}}
\newcommand{\dodot}{{\ppcon}_\odot}
\newcommand{\sn}{\neg}
\newcommand{\scn}{{\sim}}
\newcommand{\pcon}{\mathcal{C}}
\newcommand{\npcon}{\cancel{\mathcal{C}}}
\newcommand{\ppcon}{\mathcal{D}}
\newcommand{\nppcon}{\cancel{\mathcal{D}}}
\newcommand{\wt}{{\rhd}}
\newcommand{\bt}{{\blacktriangleright}}
\newcommand{\tw}{{\lhd}}
\newcommand{\tb}{{\blacktriangleleft}}
\def\fns{\footnotesize}
\newcommand{\nc}{\,\mid\!\sim}
\def\aol{\rule[0.5865ex]{1.38ex}{0.1ex}}
\def\pdra{\mbox{$\,>\mkern-8mu\raisebox{-0.065ex}{\aol}\,$}}
\tikzset{
	treenode/.style = {align=center, inner sep=0pt, text centered},
	Ske/.style = {treenode, ellipse, double, draw=black,
		minimum width=6pt, thick},
	PIA/.style = {treenode, ellipse, black, draw=black,
		minimum width=6pt},
	Crit/.style = {treenode, rectangle, draw=black,
		minimum width=0.5em, minimum height=0.5em}
}
\begin{document}

\begin{frontmatter}
  \title{
Normative implications}
  \author{Andrea De Domenico}
  \address{Vrije Universiteit Amsterdam}
  \author{Ali Farjami}
  \address{University of Luxembourg}
  \author{Krishna Manoorkar}
  \address{Vrije Universiteit Amsterdam}
  \author{Alessandra Palmigiano}
  \address{Vrije Universiteit Amsterdam, Department of Mathematics and Applied Mathematics, University of Johannesburg}
  \author{Mattia Panettiere}
  \address{Vrije Universiteit Amsterdam}
  \author{Apostolos Tzimoulis}
  \address{University of Luxembourg}
  \author{Xiaolong Wang}
  \address{Vrije Universiteit Amsterdam}

  \begin{abstract}
  We  continue to develop a research line initiated in \cite{wollic22}, studying I/O logic from an algebraic approach based on subordination algebras. 
  We introduce the classes of slanted (co-)Heyting algebras, as equivalent presentations of distributive lattices with subordination relations. Interpreting subordination relations as the algebraic counterparts of input/output relations on formulas yields (slanted) modal operations with interesting deontic interpretations. We study the theory of slanted and co-slanted Heyting algebras, develop algorithmic correspondence and inverse correspondence, and present some deontically meaningful axiomatic extensions and examples.
  \end{abstract}
\begin{keyword}
  I/O logic,
modal characterizations of normative conditions,
 subordination algebras,
slanted Heyting algebras, 
(inverse) correspondence.
 \end{keyword}
 \end{frontmatter}

\section{Introduction}
\label{sec: introduction}
This paper continues a line of investigation, initiated in \cite{wollic22}, 
on the study of input/output logic \cite{Makinson00}  from an algebraic perspective based on   {\em subordination algebras} \cite{de2024obligations2}. This approach has allowed to uniformly extend input/output logic to a large family of nonclassical logics \cite{de2024obligations}, and, in this generalized context, to obtain modal characterizations of an infinite class of conditions on normative and permission systems as well as on their interaction \cite{de2024correspondence}. Subordination algebras are tuples $(A, \prec)$ where $A$ is an algebra (typically, a Boolean algebra), and ${\prec}\, \subseteq A\times A$ is a {\em subordination relation}, i.e.~a binary relation endowed with the algebraic counterparts of the well known properties $(\top), (\bot), (\mathrm{SI}), (\mathrm{WO}), (\mathrm{AND}), (\mathrm{OR})$ of normative systems in input/output logic.
These properties can be equivalently reformulated as the requirement that, for every $a\in A$, the sets ${\prec}[a] := \{b\in A\mid a\prec b\}$ and ${\prec}^{-1}[a] := \{b\in A\mid b\prec a\}$ be a filter and an ideal of $A$, respectively.
Subordination algebras have been  introduced independently from input/output logic,  in the context of a research program in point-free topology, aimed at developing region-based theories of space. In this literature, subordination algebras are used as an `umbrella' type of notion which crops up under several  equivalent presentations. One of these presentations is the notion of {\em quasi-modal algebras} introduced by Celani \cite{celani2001quasi} (see discussions in  \cite{de2021slanted}\cite{de2024obligations2}). These are tuples $(A, \Delta)$ such that $A$ is a Boolean algebra and $\Delta$ is a {\em quasi-modal operator}, i.e.~a map such that $\Delta(a)$ is an ideal of $A$ for any $a\in A$. Structures closely related to quasi modal algebras are {\em generalized implication lattices}   \cite{castro2011distributive}, i.e.~tuples $(A, \Rightarrow)$ such that $A$ is a distributive lattice and $\Rightarrow$ is a {\em generalized implication}, i.e.~a binary map such that $a\Rightarrow b$ is an ideal of $A$ for all $a, b\in A$, and satisfying certain additional conditions. Mediated by the notion of quasi-modal operator, in \cite[Lemma 5]{calomino2023study}, it is shown that generalized implications and subordination relations over a given Boolean algebra bijectively correspond to each other. 
 This observation is one of the starting points of the present paper.

In the present paper, we generalize the connection between generalized implications and subordination relations in the context of (distributive) lattices by introducing  {\em slanted Heyting algebras} and {\em slanted co-Heyting algebras} (see Definitions \ref{def:slanted Heyting} and \ref{def:slanted co-Heyting}). Any subordination algebra $\mathbb{S}=(A, \prec)$ induces the binary slanted operators $\sip$ and $\scip$  on $A$ such that each $a, b\in A$ are mapped to the following open and closed elements of its canonical extensions, respectively:
{{\small
\begin{equation}
\label{eq:slantedheyting coheyting}
a\sip b: = \bigvee\{c\in A\mid a\wedge c\prec b\}
\quad \mbox{and} \quad a\scip b: = \bigwedge\{c\in A\mid b\prec a\vee c\}.
\end{equation}
}}
These slanted operations  can be understood as {\em normative} counterparts of the identities  defining Heyting implication and co-implication, respectively:
{\small \begin{equation}
\label{eq:heyting coheyting}
    a \rightarrow b: = \bigvee\{c\in A\mid a\wedge c \leq b\}
\quad \mbox{and} \quad a \pdra  b: = \bigwedge\{c\in A\mid b\leq a\vee c\}
\end{equation}}
 \noindent indeed, the order relation in \eqref{eq:heyting coheyting}, encoding {\em logical} entailment, is replaced in \eqref{eq:slantedheyting coheyting} by the subordination  relation $\prec$ which  encodes {\em normative} entailment.  We can  interpret $a \sip b$  
 as `the disjunction of all propositions that  {\em normatively} imply $b$ when in conjunction with $a$'  (and $a \scip b$  as `the conjunction of all propositions whose disjunction with $a$ is {\em normatively} implied by $b$').
%
Hence, $a \sip b$ can be understood as the weakest side condition, or {\em context}, under which $a$ normatively implies $b$. For  example, consider the conditional  obligation $citizen\prec taxes$, which reads `If you are a citizen then you must pay taxes'. This   obligation holds under a set of background assumptions  such as  `You earn more than a minimum threshold' ($earn$),  which are  often left implicit. The expression $citizen \sip taxes$ allows us to represent, and make inferences with, the {\em constellation} of background conditions which make $citizen\prec taxes$ a valid conditional obligation, purely in terms of $citizen$ and $taxes$; for example, in this case, we can represent this scenario by the inequality (`logical entailment')  $earn\leq  citizen\sip taxes$.
 In this way, the language enriched with  $\sip$ becomes more suitable to express flexible dependencies that reflect real-world scenarios where obligations and permissions may change based on context, capacity, or other factors.

\paragraph{Structure of the paper.} In Section \ref{sec: preliminaries}, we collect basic technical definitions; in Section \ref{sec:slanted Heyting}, we introduce slanted Heyting algebras and show that they equivalently represent subordination algebras; in Section \ref{sec:examples on sip}, we discuss how axioms in the language of slanted Heyting algebras  capture interesting normative conditions; in Sections \ref{sec: slanted co-heyting} and \ref{sec:slanted negations}, we introduce slanted Heyting co-implication and the pseudo (co-)complements; more examples of conditions are discussed in Section \ref{sec:more examples}; in Section \ref{sec: correspondence inverse corr}, we identify the classes of axioms and normative conditions that correspond to each other; in Section \ref{sec:Models} we discuss how this language can be used to capture norms in various types of contexts; we conclude in Section \ref{sec:conclusions}.

\section{Preliminaries}
\label{sec: preliminaries}
  In what follows,  when we say `lattice', we mean `bounded lattice'.  
Let $A$ be a sublattice 
of a complete lattice $A'$. 
\begin{enumerate}[noitemsep,nolistsep]
\item An element $k\in A'$ is {\em closed} if $k = \bigwedge F$ for some non-empty 
$F\subseteq A$; an element $o\in A'$ is {\em open} if $o = \bigvee I$ for some non-empty 
$I\subseteq A$;  
\item  $A$ is {\em dense} in $A'$ if every element of $A$ is both  the join of closed elements and  the meet of open elements of $A'$.
\item $A$ is {\em compact} in $A'$ if, for all nonempty $F, I\subseteq A$, 
if $\bigwedge F\leq \bigvee I$ then $\bigwedge F'\leq \bigvee I'$ for some finite $F'\subseteq  F$ and some finite $I'\subseteq I$. 
\item The {\em canonical extension} of a lattice $A$ is a complete lattice $A^\delta$ containing $A$
as a dense and compact sublattice.
\end{enumerate}
The canonical extension $A^\delta$ of any lattice $A$ always exists and is  unique up to an isomorphism fixing $A$ (cf.~\cite[Propositions 2.6 and 2.7]{DuGePa05}).


%
We let $K(A^\delta)$ (resp.~$O(A^\delta)$) denote the set of the closed (resp.~open) elements    of $A^\delta$. It is easy to see that $A = K(A^\delta)\cap O(A^\delta)$, which is why the elements of $A$ are referred to as the {\em clopen} elements of $A^\delta$. The following propositions collect well known facts which we will use in the remainder of the paper. In particular, item  (iv) of the next proposition is a variant of \cite[Lemma 3.2]{gehrke2001bounded}.

 \begin{proposition} \label{prop:background can-ext} (cf.~\cite[Proposition 2.6]{de2024obligations2})
 For any lattice $A$,
  all $k_1, k_2\in K(A^\delta)$, $o_1, o_2\in O(A^\delta)$, and  $u_1, u_2\in A^\delta$,
 \begin{enumerate}[label=(\roman*), noitemsep,nolistsep]
     \item $k_1\leq k_2$ iff $k_2\leq b$ implies $k_1\leq b$ for all $b\in A$.
     \item $o_1\leq o_2$ iff $b\leq o_1$ implies $b\leq o_2$ for all $b\in A$.
     \item $u_1\leq u_2$ iff  $k\leq u_1$ implies $k\leq u_2$ for all $k\in K(A^\delta)$, iff $u_2\leq o$ implies $u_1\leq o$ for all $o\in O(A^\delta)$.
     \item 
     $k_1\vee k_2\in K(A^\delta)$ and  $o_1\wedge o_2\in O(A^\delta)$.
  \end{enumerate}   
 \end{proposition}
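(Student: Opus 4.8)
The plan is to treat items (i)--(iii) as essentially formal consequences of the definitions of closed and open elements together with density, and to isolate item (iv) as the only part that genuinely uses compactness. Throughout I would exploit the top--bottom duality: (ii) is the order dual of (i), the second equivalence in (iii) is dual to the first, and the open half of (iv) is dual to the closed half, the duality being the exchange of $\bigwedge$, $K(A^\delta)$, $\leq$ for $\bigvee$, $O(A^\delta)$, $\geq$. Hence it suffices to argue the closed/meet versions and invoke duality for the rest.

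For (i), the forward implication is immediate by transitivity of $\leq$. For the converse I would write $k_2 = \bigwedge F$ with $\emptyset \neq F \subseteq A$; since $k_2 \leq b$ for every $b \in F$, the hypothesis yields $k_1 \leq b$ for every $b \in F$, whence $k_1 \leq \bigwedge F = k_2$. Item (ii) is dual. For (iii) the forward directions are again transitivity; for the converse of the first equivalence, density lets me write $u_1 = \bigvee\{k \in K(A^\delta) \mid k \leq u_1\}$, and since each such $k$ satisfies $k \leq u_2$ by hypothesis, $u_1 \leq u_2$ follows. The converse of the second equivalence is dual, using $u_2 = \bigwedge\{o \in O(A^\delta) \mid u_2 \leq o\}$.

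The main obstacle is (iv), where I must exhibit an $A$-indexed meet presenting $k_1 \vee k_2$. Writing $k_i = \bigwedge F_i$, I may assume each $F_i$ is closed under finite meets: replacing $F_i$ by its meet-closure does not change $\bigwedge F_i$ and keeps it inside $A$, since $A$ is a sublattice; in particular each $F_i$ is down-directed. I then set $G := \{a \vee b \mid a \in F_1,\ b \in F_2\} \subseteq A$ and claim $k_1 \vee k_2 = \bigwedge G$, which exhibits $k_1 \vee k_2$ as closed. The inequality $k_1 \vee k_2 \leq \bigwedge G$ is just monotonicity of $\vee$, as $k_1 \leq a$ and $k_2 \leq b$ give $k_1 \vee k_2 \leq a \vee b$. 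For the reverse inequality I would invoke (iii): it is enough to show $\bigwedge G \leq o$ for every open $o$ with $k_1 \vee k_2 \leq o$. Fixing such an $o = \bigvee J$, from $k_1 = \bigwedge F_1 \leq \bigvee J$ compactness produces finite subsets with $\bigwedge F_1' \leq \bigvee J'$, and down-directedness of $F_1$ then yields a single $a \in F_1$ with $a \leq \bigwedge F_1' \leq o$; symmetrically one obtains $b \in F_2$ with $b \leq o$. Then $a \vee b \in G$ and $a \vee b \leq o$, so $\bigwedge G \leq a \vee b \leq o$, as required. The meet half of (iv) follows dually. The one point demanding care is this reduction to down-directed $F_i$ and the conversion of the finite meets delivered by compactness into a single index $a \in F_i$; this is precisely where the sublattice and compactness hypotheses are indispensable, and the remaining boundary cases (e.g.\ $o = \top$) are routine.
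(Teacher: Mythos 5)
Your proof is correct, and it follows the standard argument: the paper itself gives no proof of this proposition (it is cited from \cite[Proposition 2.6]{de2024obligations2}, with item (iv) noted as a variant of \cite[Lemma 3.2]{gehrke2001bounded}), and your treatment of (i)--(iii) via density and of (iv) via the meet-closed presentation $k_1\vee k_2=\bigwedge\{a\vee b\mid a\in F_1, b\in F_2\}$ together with compactness is exactly the argument used in that literature. The one point worth stating explicitly is that the finite subsets produced by compactness may be taken nonempty (enlarging them only helps), so that $\bigwedge F_1'$ genuinely lies in the meet-closed $F_1$; with that remark your "boundary cases" are fully covered.
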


\begin{proposition}(cf.~\cite[Proposition 2.7]{de2024obligations2})
    \label{prop: compactness and existential ackermann}
 For any  lattice $A$, 
\begin{enumerate}[noitemsep,nolistsep]
   \item for any $b\in A$, 
   $k_1, k_2 \in K(A^\delta)$, and $o \in O(A^\delta)$, 
    \begin{enumerate}[label=(\roman*),noitemsep]
        \item $ k_1\wedge k_2 \leq b$ implies $a_1\wedge a_2\leq b$ for some $a_1, a_2 \in A$ s.t.~$ k_i \leq a_i$;
         \item $ k_1\wedge k_2 \leq o$ implies $a_1\wedge a_2\leq b$ for some $a_1, a_2 ,b \in A$ s.t.~$ k_i \leq a_i$ and $b\leq o$;
             \item 
             $\bigwedge K\in K(A^\delta)$ for every  $K\subseteq K(A^\delta)$.
    \end{enumerate}
    \item for any $a\in A$, 
   $o_1, o_2 \in O(A^\delta)$, and $k \in K(A^\delta)$, 
    \begin{enumerate}[label=(\roman*),noitemsep,nolistsep]
        \item $a\leq o_1\vee o_2 $ implies $a \leq b_1\vee b_2$ for some $b_1, b_2 \in A$ s.t.~$  b_i\leq o_i$;
         \item $ k \leq o_1\vee o_2 $ implies $a \leq b_1\vee b_2$ for some $a, b_1, b_2 \in A$ s.t.~$ b_i \leq o_i$ and $ k \leq a$.
         \item 
         $\bigvee O\in O(A^\delta)$ for every  $O\subseteq O(A^\delta)$.
    \end{enumerate}
    \end{enumerate}
\end{proposition}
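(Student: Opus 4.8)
The plan is to derive every item from just two ingredients already recorded in the preliminaries: the definitions of closed and open elements (as meets $\bigwedge F$ and joins $\bigvee I$ of nonempty $F, I \subseteq A$) and the compactness of $A$ in $A^\delta$. The recurring move is to rewrite the closed elements occurring on the left of a $\leq$ as a single meet over a subset of $A$ and the open elements on the right as a single join over a subset of $A$, invoke compactness to extract finite subsets, and then use that $A$, being a bounded sublattice, is closed under finite meets and joins, so that the finite meets and joins produced by compactness are themselves elements of $A$.

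Concretely, for item 1(i) I would write $k_1 = \bigwedge F_1$ and $k_2 = \bigwedge F_2$ with $F_1, F_2 \subseteq A$ nonempty, so that $k_1 \wedge k_2 = \bigwedge(F_1 \cup F_2)$, and regard $b \in A$ as the open element $\bigvee\{b\}$. From $\bigwedge(F_1 \cup F_2) \leq \bigvee\{b\}$, compactness yields a finite $F' \subseteq F_1 \cup F_2$ with $\bigwedge F' \leq b$. Partitioning $F'$ into $G_1 \subseteq F_1$ and $G_2 \subseteq F_2$ and setting $a_i := \bigwedge G_i$ (the empty meet being $\top \in A$) gives $a_1, a_2 \in A$ with $k_i = \bigwedge F_i \leq \bigwedge G_i = a_i$ and $a_1 \wedge a_2 = \bigwedge F' \leq b$, as required. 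Item 1(ii) is the same argument with an open $o = \bigvee I$ ($I \subseteq A$ nonempty) in place of $b$: compactness now returns finite $F' \subseteq F_1 \cup F_2$ and $I' \subseteq I$ with $\bigwedge F' \leq \bigvee I'$, and one takes $b := \bigvee I' \in A$, which satisfies $b \leq o$, together with the same $a_1, a_2$. Item 1(iii) is immediate: for $K \subseteq K(A^\delta)$, writing each $k = \bigwedge F_k$ gives $\bigwedge K = \bigwedge \bigcup_{k \in K} F_k$, a meet of a subset of $A$, hence closed (the empty case giving $\top = \bigwedge\{\top\} \in K(A^\delta)$).

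Part 2 is the order-dual of Part 1, obtained by exchanging $\bigwedge$ with $\bigvee$ and $K(A^\delta)$ with $O(A^\delta)$ and reversing $\leq$; the same three arguments apply, now representing $a \in A$ as the closed element $\bigwedge\{a\}$, expressing the open elements on the right as joins $\bigvee I_j$, and using closure of $A$ under finite joins. I do not expect any genuine obstacle here, since the whole statement is essentially a repackaging of compactness; the only point requiring a little care is the \emph{splitting} of the finite set returned by compactness across the two components $F_1, F_2$ (resp.\ $I_1, I_2$), together with the harmless bookkeeping of empty meets and joins, which equal $\top$ and $\bot$ and lie in $A$ by boundedness.
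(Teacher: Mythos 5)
Your proof is correct: the paper states this proposition without proof (citing \cite[Proposition 2.7]{de2024obligations2} as a known fact), and your argument --- rewriting the closed and open elements as meets and joins over subsets of $A$, applying compactness, splitting the resulting finite sets across the two components, and using that $A$ is a bounded sublattice closed under finite meets and joins (with empty meet $\top$ and empty join $\bot$) --- is exactly the standard argument intended. No gaps; the care you take with the partition of $F'$ (resp.\ $I'$) and with the empty-set cases is precisely what is needed.
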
 
\noindent For the purposes of this paper, a {\em subordination algebra} is a tuple $\mathbb{S} = (A, \prec)$ such that $A$ is a distributive lattice and   ${\prec}\subseteq A\times A$  is a {\em subordination relation}, i.e.~$\prec$ satisfies the following conditions: for all $a, b,  c, d\in A$,

{\small
{{
\centering
\begin{tabular}{ll}
    ($\bot$-$\top$)  $\bot\prec\bot$ and $\top\prec\top$; &  (AND)  if $a \prec b$ and $a\prec c$ then $a \prec b \wedge c$;\\
     (OR)  if $a \prec c$ and $b\prec c$ then $a\vee b \prec  c$;   & (WO-SI)  if $a \leq b \prec c \leq d$ then $a \prec d$.
\end{tabular}
\par
}}
}

\section{Slanted Heyting algebras and subordination algebras} \label{sec:slanted Heyting}

Slanted Heyting algebras form a subclass of slanted DLE-algebras \cite[Definition 3.2]{de2021slanted}. 
\begin{definition}
    \label{def:slanted Heyting}
    A {\em slanted Heyting algebra} is a tuple $\mathbb{A} = (A, \si)$ s.t.~$A$ is a distributive lattice, and $\si: A\times A\to A^\delta$ s.t.~for all $a, b, c\in A$,
    \begin{enumerate}[noitemsep,nolistsep]
        \item $a\si b\in O(A^\delta)$;
        \item $a\si(b_1\wedge b_2) = (a\si b_1)\wedge (a\si b_2)$ and $a\si\top = \top$;
        \item $(a_1\vee a_2)\si b = (a_1\si b)\wedge (a_2\si b)$ and $\bot\si b = \top$;
        \item $c\leq a\si b$ iff $a\wedge c\leq \top \si b$.
    \end{enumerate}

The {\em canonical extension} of $\mathbb{A} = (A, \si)$ (cf.~\cite[Definition 3.4]{de2021slanted}) is $\mathbb{A}^\delta = (A^\delta, \si^\pi)$, where $A^\delta$ is the canonical extension of $A$,  and $\si^\pi: A^\delta \times A^\delta \to A^\delta$  is defined as follows:
 for every $k\in K(A^\delta)$, $o\in O(A^\delta)$ and $u,v\in A^\delta$,\
 
 {{\centering
 $k \si^\pi o : = \bigvee \{ a \si b   \mid  k \leq a, b \leq o, a,b \in A\}$
 \par
 }}

{{
\centering
$u \si^\pi v : = \bigwedge \{ k \si^\pi o \mid k \in K(A^\delta), o\in O(A^\delta), k \leq u, v \leq o\}$
\par
}}

\end{definition}
\noindent The map $\si^\pi$ extends $\si$, distributes over arbitrary meets in its second coordinate, and distributes arbitrary joins to  meets in its first coordinate (cf.~\cite[Lemma 3.5]{de2021slanted}). In what follows, we will omit the superscript $^\pi$, and rely on the arguments for disambiguation. Next, we discuss how slanted Heyting algebras can be understood as an equivalent presentation of subordination algebras. 
\begin{definition}
\label{def:upper lower star}
   The slanted Heyting algebra associated with the subordination algebra\footnote{\label{footnote}Notice that, by definition, $a\sip p = a\to\blacksquare b$, where $\blacksquare b \coloneqq \bigvee \prec^{-1}[b]$ and $\to$ is the Heyting algebra implication which is naturally defined on $A^\delta$ when $A$ is a distributive lattice. However, the definition as given and some of the ensuing proofs hold in a wider setting than that of distributive lattices. In particular,  
   Proposition \ref{prop:back and forth ast} holds verbatim if $\mathbb{S}$ is a general lattice-based proto-subordination algebra with ($\top$), ($\bot$), (SI) and (WO) and $\mathbb{A}$ is a slanted  algebra  s.t.~$\sip$ is antitone and $\bot$-reversing in the first coordinate and monotone and $\top$-preserving  in the second one. In such a setting, the equivalent characterization of $a\sip p $ as $a\to\blacksquare b$ is not available anymore, since $\to$ does not exist in general.} $\mathbb{S} = (A, \prec)$ is the tuple $\mathbb{S}_\ast\coloneqq (A, \sip)$, s.t.~for all $a, b\in A$,
   {\small
    \begin{equation}
        \label{eq:definitionsi of prec}a\sip b: = \bigvee\{c\in A\mid a\wedge c\prec b\}.\end{equation}}
     The subordination algebra associated with the slanted Heyting algebra $\mathbb{A} = (A, \si)$ is the tuple $\mathbb{A}^\ast \coloneqq(A, \precs)$ s.t.~for all $a, b\in A$,
     
     {{
     \centering
     $a\precs b$ iff $a\leq \top \si b$.
     \par
     }}
\end{definition}

\noindent If $\prec$ represents a normative system,  then  $a \sip b$ is the disjunction of all propositions that together with $a$ normatively imply $b$. That is, $a\wedge (a\sip b)\prec b$ always holds, and $a\sip b$  is the weakest element of $A^\delta$ with this property.

\begin{proposition}
\label{prop:back and forth ast}
    For any  subordination algebra $\mathbb{S} = (A, \prec)$, any slanted Heyting algebra $\mathbb{A} = (A, \si)$, and for all $a,b,c \in A$,
    \begin{enumerate}[noitemsep,nolistsep]
        \item   $c\leq a\sip b \quad \text{ iff } \quad a\wedge c\prec b$;
        \item $\mathbb{S}_\ast = (A, \sip)$ is a slanted Heyting algebra;
        \item $\mathbb{A}^\ast = (A, \precs)$ is a subordination algebra;
        \item  $a {\prec_{\si_{\prec}}}b$ iff $a\prec b$, and   $a\ {\si_{\prec_{\si}}}b = a\si b$.
    \end{enumerate}
\end{proposition}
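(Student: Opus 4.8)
The plan is to establish item (i) first, since items (ii)--(iv) all reduce to it together with the defining properties of $\prec$, the abstract conditions on $\si$, and the canonical-extension facts in Proposition~\ref{prop:background can-ext}. For item (i), the right-to-left direction is immediate: if $a\wedge c\prec b$ then $c$ belongs to the set $\{d\in A\mid a\wedge d\prec b\}$, whence $c\leq a\sip b$ by definition of the join. For the converse I would first note that this set is always nonempty, since from ($\bot$-$\top$) and (WO-SI) one obtains $\bot\prec b$, and $a\wedge\bot=\bot$, so $\bot$ is a member; this is what makes $a\sip b$ genuinely open and licenses the compactness argument.

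Assuming $c\leq a\sip b=\bigvee\{d\mid a\wedge d\prec b\}$, I would invoke compactness of the canonical extension (with $c\in A\subseteq K(A^\delta)$ and the index set contained in $A$) to extract finitely many $d_1,\dots,d_n$ with $c\leq d_1\vee\cdots\vee d_n$ and each $a\wedge d_i\prec b$. Applying (OR) repeatedly together with the distributivity of $A$ gives $a\wedge(d_1\vee\cdots\vee d_n)=\bigvee_i(a\wedge d_i)\prec b$, and then (WO-SI) applied to $a\wedge c\leq a\wedge\bigvee_i d_i\prec b\leq b$ yields $a\wedge c\prec b$. This compactness-plus-(OR) step is the main obstacle, as it is the only place where the finitary behaviour of $\prec$ must be reconciled with the infinitary join defining $\sip$.

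Given item (i), item (ii) is routine. Condition~1 holds because $a\sip b$ is a join of elements of $A$ over a nonempty set; condition~4 follows by reading item (i) off both sides, since $c\leq a\sip b\iff a\wedge c\prec b$ and, using item (i) with first argument $\top$, $a\wedge c\leq\top\sip b\iff a\wedge c\prec b$. For the distribution laws in conditions~2 and~3, I would use Proposition~\ref{prop:background can-ext}(ii) to reduce equality of open elements to testing against clopen $c\in A$; each resulting equivalence unwinds via item (i) into a statement about $\prec$ settled by (AND), (OR), (WO-SI) and distributivity (e.g.\ $a\wedge c\prec b_1\wedge b_2$ iff $a\wedge c\prec b_1$ and $a\wedge c\prec b_2$, by (AND) and (WO-SI); and $(a_1\vee a_2)\wedge c=(a_1\wedge c)\vee(a_2\wedge c)$ for condition~3). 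The normalizations $a\sip\top=\top$ and $\bot\sip b=\top$ hold because $\top$ lies in the relevant defining set in each case.

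For item (iii) I would check the four subordination axioms for $\precs$ directly from the abstract conditions on $\si$: ($\bot$-$\top$) from $\top\si\top=\top$ and $\bot\leq\top\si\bot$; (AND) from the meet-distribution of condition~2; (OR) from the fact that $\top\si c$ is a common upper bound of $a$ and $b$; and (WO-SI) from monotonicity of $\si$ in its second coordinate, itself a consequence of condition~2. Finally, item (iv) records that the two passages are mutually inverse: $a\,{\prec_{\si_{\prec}}}\,b\iff a\leq\top\sip b\iff a\prec b$ is again item (i) with first argument $\top$, while $a\,{\si_{\prec_{\si}}}\,b=\bigvee\{c\in A\mid a\wedge c\leq\top\si b\}=\bigvee\{c\in A\mid c\leq a\si b\}=a\si b$, where the middle equality is condition~4 and the last uses that $a\si b$ is open, hence equal to the join of the clopen elements below it.
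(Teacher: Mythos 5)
Your proposal is correct and follows essentially the same route as the paper: item (i) is established first via the definition of the join and compactness, and items (ii)--(iv) are then reduced to item (i), the subordination axioms, and Proposition \ref{prop:background can-ext}(ii). The only (welcome) difference is that you spell out the compactness step more carefully — extracting finitely many $d_i$ and closing the defining set under (OR) plus distributivity, and noting its nonemptiness — where the paper compresses this into ``$c\leq d$ for some $d$''.
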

\begin{proof}
(i) If $a\wedge c\prec b$, then  $c\leq \bigvee \{c\mid a\wedge c\prec b\} =  a\sip b$. If $c\leq a\sip b = \bigvee \{c\mid a\wedge c\prec b\}\in O(A^\delta)$, by compactness, $c\leq d$ for some $d\in A$ s.t.~$a\wedge d\prec b$. Hence, $a\wedge c\leq a\wedge d\prec b$, which implies $a\wedge c\prec b$ by (SI).

    (ii) By definition, $a\sip b = \bigvee \{c\mid a\wedge c\prec b\}\in O(A^\delta)$ for any $a, b\in A$. Moreover, $a\sip\top = \bigvee\{c\mid a\wedge c\prec \top\} = \top$, the last identity holding because properties $(\top)$ and (SI) hold for $\prec$. Likewise, $\bot\sip b = \bigvee\{c\mid \bot\wedge c\prec b\} = \top$, the last identity holding because properties $(\bot)$ and (WO) hold for $\prec$. Let $a, b_1, b_2\in A$, and let us show that
    
    {{
    \centering
    $a\sip(b_1\wedge b_2) = (a\sip b_1)\wedge (a\sip b_2)$.
    \par
    }}
    
   \noindent  For the left-to-right inequality, by Proposition \ref{prop:background can-ext} (ii) and item (i), 
     this is equivalent to show that, for any $c\in A$,
     
     {{
     \centering
     $a\wedge c\prec b_1\wedge b_2 $ iff $a\wedge c\prec b_1 \; \text{ and } \; a\wedge c\prec b_2$. 
     \par
     }}
    
    \noindent If $a\wedge c\prec b_1\wedge b_2$, then $a\wedge c\prec b_1\wedge b_2\leq b_i$ for $i = 1, 2$, which implies by (WO) that $a\wedge c\prec  b_i$, as required. Conversely, 
    if $a\wedge c\prec  b_i$ for $i = 1, 2$, then, by (AND), $a\wedge c\prec b_1\wedge b_2$, as required.
    Let $a_1, a_2, b\in A$, and let us show that 

    {{
    \centering
    $(a_1\vee a_2)\sip b = (a_1\sip b)\wedge (a_2\sip b)$.
    \par
    }}
    
\noindent    by Proposition \ref{prop:background can-ext} (ii) and  (i), this is equivalent to show that, for any $c\in A$,
    
    {{
    \centering
    $(a_1\vee a_2)\wedge c\prec b$ iff $a_1\wedge c\prec b \; \text{ and } \; a_2\wedge c\prec b$.
    \par
    }}

   \noindent If $(a_1\vee a_2)\wedge c\prec b$, then $a_i\wedge c\leq (a_1\vee a_2)\wedge c\prec b$,   which implies, by (SI), that  $a_i\wedge c\prec b$ for $i = 1, 2$, as required. Conversely, if $a_1\wedge c\prec b$ and $a_2\wedge c\prec b$, then by distributivity and  (OR), $(a_1\vee a_2)\wedge c = (a_1\wedge c)\vee (a_2\wedge c)\prec b$, as required.   
     Finally, let us show that 
    for all $a, b, c\in A$,
    
    {{
    \centering
    $a\wedge c\leq \top \sip b$ iff  $ c\leq a\sip b$.
    \par
    }}
    \noindent By item (i), it is enough to show that $(a\wedge c)\wedge \top\prec b$ iff $a\wedge c\prec b$, which is immediately true.

    (iii) As to $(\bot)$, $\bot\precs b$ iff $\bot\leq \top\si b$, which is clearly true. 
     As to $(\top)$, $a\precs \top$ iff $ a\leq \top\si \top$, which is  true by Definition \ref{def:slanted Heyting}.2. As to (WO), $a\precs b\leq b'$ implies $a\leq \top\si b\leq \top\si b'$, hence $a\precs b'$, as required. As to (SI), $a'\leq a\precs b$ implies $a'\leq a\leq \top\si b$, and hence $a'\precs b$, as required. As to (AND), if $a\precs b_i$ for $i = 1, 2$, then $a\leq \top\si b_i$, therefore $a\leq (\top\si b_1)\wedge (\top\si b_2) = \top\si (b_1\wedge b_2)$, and so $a\precs b_1\wedge b_2$, as required. As to (OR), if $a_i\leq b$ for $i = 1, 2$, then $a_i\leq \top\si b$, hence $a_1\vee a_2\leq \top\si b$ and so $a_1\vee a_2\precs b$, as required. 

     (iv) By Definition \ref{def:upper lower star} and item (i), $a {\prec_{\si_{\prec}}} b$ iff $a\leq \top \sip b $ iff $a = \top\wedge a\prec b$, as required. Finally, to show that $a\ {\si_{\prec_{\si}}}b = a\si b$, by Proposition \ref{prop:background can-ext} (ii), it is enough to show that for all $c\in A$,
     
     {{\centering
        $c\leq a\ {\si_{\prec_{\si}}}b$ iff  $c\leq a\si b$.
        \par
     }}
    \noindent By item (i), $c\leq a \ {\si_{\prec_{\si}}} b $ iff  $a\wedge c \precs b$ i.e.~$a\wedge c\leq \top\si b$, which, by Definition \ref{def:slanted Heyting}.4, is equivalent to $c\leq a\si b$, as required.
     \end{proof}

\begin{lemma}
\label{lemma:sip and prec}
    For any slanted Heyting algebra $\mathbb{A} = (A, \si)$, any $k,k' \in K(A^\delta)$,  $o \in O(A^\delta)$, and $u,v,w \in A^\delta$,
\begin{enumerate}[noitemsep,nolistsep]
    \item $k \si o \in  O(A^\delta)$;
    \item $
k\leq k'\si o\quad \text{ iff } \quad \exists a\exists b\exists c(a\leq c\si b \ \& \ k\leq a\ \&\ k'\leq c \ \&\   b\leq o)$;

\item $
k\land k'\leq\top\si o\quad \text{ iff } \quad \exists a\exists b\exists c(a\land c\leq\top\si b \ \& \ k\leq a\ \&\ k'\leq c \ \&\   b\leq o)$;
\item $k \leq k' \si o\quad $ iff $\quad k \wedge k' \leq \top \si o$;
\item $w \leq u \si v\quad $ iff $\quad w \wedge u \leq \top \si v$.
\end{enumerate}
\end{lemma}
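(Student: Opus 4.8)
The plan is to establish items (i)--(iii) as the technical core, to read off (iv) by a purely algebraic combination of (ii) and (iii), and to obtain (v) by reducing the arbitrary elements to their approximating closed and open elements. Throughout I will use two monotonicity facts that are immediate from Definition \ref{def:slanted Heyting}: axiom 3 makes $\si$ antitone in its first coordinate (so $a_1\leq a_2$ implies $a_2\si b\leq a_1\si b$), and axiom 2 makes $\si$ monotone in its second coordinate (so $b_1\leq b_2$ implies $a\si b_1\leq a\si b_2$). Item (i) is then immediate: by the definition of $\si^\pi$ on a closed/open pair, $k\si o=\bigvee\{a\si b\mid k\leq a,\ b\leq o,\ a,b\in A\}$ is a join of elements of $O(A^\delta)$ by Definition \ref{def:slanted Heyting}.1, hence lies in $O(A^\delta)$ by Proposition \ref{prop: compactness and existential ackermann}.2(iii).

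For item (ii), the right-to-left direction is immediate, since a witnessing pair $(c,b)$ contributes $c\si b$ to the join defining $k'\si o$, whence $k\leq a\leq c\si b\leq k'\si o$. The left-to-right direction is the main obstacle, and I would handle it by a double use of compactness followed by a consolidation step. As $k'\si o=\bigvee\{c\si b\mid k'\leq c,\ b\leq o\}$ is open and $k$ is closed, compactness first produces a single clopen $a$ with $k\leq a\leq k'\si o$; writing each open joinand as the join of the clopens below it, a second application yields finitely many pairs $(c_1,b_1),\dots,(c_n,b_n)$ with $k'\leq c_j$, $b_j\leq o$ and $a\leq\bigvee_j(c_j\si b_j)$. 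The consolidation sets $c:=\bigwedge_j c_j$ and $b:=\bigvee_j b_j$; then $k'\leq c$, $b\leq o$, and by antitonicity in the first and monotonicity in the second coordinate $c_j\si b_j\leq c\si b$ for every $j$, so $a\leq c\si b$, which furnishes the required witnesses.

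Item (iii) is proved by the same pattern. For the forward direction, $k\wedge k'$ is closed by Proposition \ref{prop: compactness and existential ackermann}.1(iii) and $\top\si o=\bigvee\{\top\si b\mid b\leq o\}$ is open, so compactness and the same monotonicity-consolidation produce a single $b\leq o$ with $k\wedge k'\leq\top\si b$; then Proposition \ref{prop: compactness and existential ackermann}.1(ii) splits this inequality as $a\wedge c\leq\top\si b$ with $k\leq a$ and $k'\leq c$. The converse is again immediate from monotonicity. Item (iv) now follows with no further compactness: by Definition \ref{def:slanted Heyting}.4 applied to the clopen witnesses, the inner condition $a\leq c\si b$ appearing in (ii) is equivalent to $a\wedge c\leq\top\si b$ appearing in (iii), and the remaining conjuncts coincide, so the two existential statements are equivalent; chaining (ii) and (iii) gives $k\leq k'\si o$ iff $k\wedge k'\leq\top\si o$.

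Finally, item (v) is obtained by reducing to the closed/open case through Proposition \ref{prop:background can-ext}(iii). On one side, $w\leq u\si v$ holds iff every closed $k\leq w$ satisfies $k\leq u\si v=\bigwedge\{k'\si o\mid k'\leq u,\ v\leq o\}$, i.e.\ iff $k\leq k'\si o$ for all closed $k\leq w$ and $k'\leq u$ and all open $o\geq v$. On the other side, since $\top$ is clopen and $\si$ is antitone in its first coordinate one has $\top\si v=\bigwedge\{\top\si o\mid v\leq o\}$, so $w\wedge u\leq\top\si v$ holds iff $w\wedge u\leq\top\si o$ for all open $o\geq v$, and by Proposition \ref{prop:background can-ext}(iii) this amounts to $k\wedge k'\leq\top\si o$ for all closed $k\leq w$ and $k'\leq u$ (using that a closed element is below $w\wedge u$ exactly when it is below both $w$ and $u$, and that $k\wedge k'$ is closed). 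By item (iv) these two conditions agree term by term, so the two sides are equivalent. The whole difficulty is thus concentrated in the compactness-and-consolidation argument for (ii) and (iii); the rest is bookkeeping.
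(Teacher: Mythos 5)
Your proposal is correct and follows essentially the same route as the paper's proof: items (i)--(iii) via the definition of $\si^\pi$ and compactness, item (iv) by matching the two existential statements through Definition \ref{def:slanted Heyting}.4, and item (v) by reducing to closed/open approximants and applying (iv) pointwise. The only differences are presentational: you make explicit the finite-consolidation step (taking $\bigwedge_j c_j$ and $\bigvee_j b_j$ and using the (anti)tonicity of $\si$) that the paper compresses into a single ``compactness'' annotation, and in item (v) you argue directly with closed elements below $w\wedge u$ instead of invoking complete distributivity of $A^\delta$ as the paper does --- a harmless and slightly more economical variant.
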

\begin{proof}
    (i) By definition, $ k \si o = \bigvee \{ a \si b   \mid  k \leq a, b \leq o, a,b \in A\}$. This implies that $k\si o \in  O(A^\delta)$ by Proposition \ref{prop: compactness and existential ackermann} (ii), since $a \si b \in O(A^\delta)$  by Definition \ref{def:slanted Heyting} (i).

    {{
    \small
    \centering
    \begin{tabular}{ l c l l}
     (ii) && $k\leq k'\si o$ \\
        & iff & $k \leq \bigvee \{ c \si b \mid k' \leq c \ \&\ b \leq o \}$ & Def.  $\si^\pi$ \\
        & iff & $\exists a\exists b\exists c (a \leq c \si b \ \&\ k \leq a \ \&\ k' \leq c \ \&\ b \leq o)$ &  compactness. \\
    \end{tabular}
\par
}}

 {{
 \small
 \centering
    \begin{tabular}{l c l l}
     (iii) && $k\land k'\leq\top\si o$ \\
        & iff & $\bigwedge \{a\land c \mid k \leq a \ \&\ k' \leq c\} \leq \bigvee \{ \top \si b \mid  b \leq o \}$ & Def.  $\si^\pi$ \\
        & iff & $\exists a\exists b\exists c (a \land c\leq \top \si b \ \&\ k \leq a \ \&\ k' \leq c \ \&\ b \leq o)$ &  compactness. \\
    \end{tabular}
\par
}}

(iv) Immediate by items (ii) and (iii), and Definition \ref{def:slanted Heyting}.(iv).

(v) Since $A$ is distributive, $A^\delta$ is completely distributive (cf.~Section \ref{sec: preliminaries}), by denseness, $w \wedge u= \bigvee\{k \wedge k' \mid k  \leq w, k' \leq u\}$. Moreover,   $\top \si v=\bigwedge\{\top \si o \mid v \leq o\}$ and $u \si v=\bigwedge\{k \si o \mid k \leq u, v \leq o\}$ by definition of $\si^\pi$. Hence:

{{
\small
\centering
    \begin{tabular}{r c l l}
 && $w\leq u\si v$ \\
 &iff &$\bigvee\{k \mid k\leq w\}\leq \bigwedge\{k' \si o \mid k' \leq u, v \leq o\}$ \\
 &iff &$\forall k\forall k'\forall o(k\leq w\ \& \ k'\leq u\ \&\ v\leq o\implies  k\leq k' \si o)$ \\
 &iff &$\forall k\forall k'\forall o(k\leq w\ \& \ k'\leq u\ \&\ v\leq o\implies  k \wedge k' \leq \top \si o)$ & item (iv) \\
 &iff &$\bigvee\{k \wedge k' \mid k  \leq w, k' \leq u\}\leq\bigwedge\{\top \si o \mid v \leq o\}$ \\
 &iff &$w\land u\leq\top\si v$.
  \end{tabular}
\par
}}
\end{proof}

\section{Examples and discussion}
\label{sec:examples on sip}
In this section, we discuss  how the semantic environment of slanted Heyting 
algebras can be used to model different properties of normative systems in a similar style to the modal characterizations of \cite{de2024correspondence}. The properties of the previous section allow us to  characterize  well known conditions\footnote{Typically, the conditions we consider are expressed in terms of rules or Horn clauses, i.e.~conjunction of relational atoms entails  a relational atom, and the entailment relation will be represented by the symbol $\implies$.} of normative systems, such as (CT) and (T), in terms of axioms (i.e.~algebraic inequalities which represent sequents) involving slanted Heyting implications. For the sake of enhanced readability, in what follows, all non quantified variables  are quantified universally. 

{{
\small
\centering
    \begin{tabular}{cll}
(T) & $a \prec b \ \& \ b \prec c \implies a \prec c$ \\
iff & $a \leq \top\sip b \ \& \ b \leq \top\sip c \implies a \leq \top \sip c$ & Prop. \ref{prop:back and forth ast}(i)\\
iff & $\exists b(a \leq \top\sip b \ \& \ b \leq \top\sip c) \implies a \leq \top \sip c$ &   \\
iff & $a \leq \top\sip ( \top\sip c) \implies a \leq \top \sip c$ & Lemma \ref{lemma:sip and prec}(ii) \\
iff & $\top\sip ( \top\sip c)  \leq \top \sip c$ & Prop. \ref{prop:background can-ext}(ii) \\
 \end{tabular}    
\par
}}

{{
\small
\centering
\begin{tabular}{cll}
(CT) & $a \prec b \ \& \ a \wedge b \prec c \Rightarrow a \prec c$ \\
iff & $a \leq \top\sip b \ \& \ a \leq b \sip c \implies a \leq \top \sip c$ & Prop. \ref{prop:back and forth ast}(i) \\
iff & $a \leq (\top\sip b) \wedge (b \sip c) \implies a \leq \top \sip c$ \\
iff & $(\top\sip b) \wedge (b \sip c)  \leq \top \sip c$ & Prop. \ref{prop:background can-ext}(ii) \\
 \end{tabular}    
\par
}}

\noindent Conversely, we can  translate inequalities on slanted Heyting algebras into equivalent conditions on subordination algebras. For example, consider the following inequalities encoding transitivity (T$_{\sip}$) and cumulative transitivity (CT$_{\sip}$)  of the implication  $\sip$. 

{{
\small
\centering
    \begin{tabular}{cll}
   (T$_{\sip}$) & $(a\sip  b) \wedge (b\sip c)\leq a\sip c$\\
    iff  & $ d\leq (a\sip  b) \wedge (b\sip c)\implies d\leq a\sip c$ & Prop. \ref{prop:background can-ext}(ii)\\
    iff  & $d\leq a\sip  b \ \&\ d\leq b\sip c\implies d\leq a\sip c$\\
    iff  & $a\wedge d\prec  b \ \&\ b\wedge d\prec   c\implies a\wedge d\prec c$ & Prop. \ref{prop:back and forth ast}(i)\\
     
    \end{tabular}    
\par
}}

{{
\small
\centering
    \begin{tabular}{cll}
(CT$_{\sip}$)  & $(a \sip b) \wedge ((a \wedge b) \sip c) \leq a \sip c$ \\
 iff & $d\leq (a \sip b) \wedge ((a \wedge b) \sip c) \implies d\leq a \sip c$ & Prop. \ref{prop:background can-ext}(ii)\\
  iff & $d\leq a \sip b \ \&\ d\leq  (a \wedge b) \sip c \implies d\leq a \sip c$\\

     iff & $d\wedge a\prec b \ \&\ d\wedge (a \wedge b) \prec c \implies d\wedge a\prec c$ & Prop. \ref{prop:back and forth ast}(i)\\
 \end{tabular}    
\par
}}
\noindent Note that   (T) and (CT)  follow from the conditions equivalent to (T$_{\sip}$) and  (CT$_{\sip}$), respectively, by setting $d:=\top$.\footnote{The converse implication also holds in the case of (CT), by    substituting $a$ in (CT)   with $a \wedge d$ (recall that $a$ is universally quantified).} 
Hence,   (T$_{\sip}$) and  (CT$_{\sip}$) can be seen as  strengthening  (T) and (CT)  under any side condition or context  $d$.  

Other interesting conditions on normative systems can similarly be expressed in terms of the language of slanted Heyting algebras. 
For example, normative counterparts of intuitionistic tautologies such as  the {\em Frege axiom}:

{{
\small
\centering
    \begin{tabular}{lll}
    & $a\sip (b\sip c)\leq (a\sip b)\sip (a\sip c)$\\
     iff   & $k\leq a\sip (b\sip c)\ \& \ k'\leq a\sip b\implies k\leq k'\sip (a\sip c)$ &denseness\\
       iff   & $\exists d(k\leq d\leq a\sip (b\sip c))\ \& \ \exists e(k'\leq e\leq a\sip b)\implies k\leq k'\sip (a\sip c)$ &compactness\\
   
     iff   & $d\leq a\sip (b\sip c)\ \& \ e\leq a\sip b\implies d\leq e\sip (a\sip c)$& ($\ast$)\\
      iff   & $d\leq a\sip f\ \& \ f\leq  b\sip c\ \& \ e\leq a\sip b\implies \exists g(d\leq e\sip g\ \&\ g\leq a\sip c)$ & compactness\\
      iff   & $a\wedge d\prec f\ \& \ a\wedge e\prec b\ \& \ b\wedge f\prec c\ \implies \exists g(e\wedge d\prec g\ \&\ a\wedge g\prec c)$ & Prop. \ref{prop:back and forth ast}(i)\\
    \end{tabular}    
\par
}}

\noindent As to the equivalence marked $(\ast)$, from bottom to top, fix $a, b, c\in A$ and $k, k'\in K(A^\delta)$ s.t.~$\exists d(k\leq d\leq a\sip (b\sip c))\ \& \ \exists e(k'\leq e\leq a\sip b)$. Then $d\leq a\sip (b\sip c)\ \& \ e\leq a\sip b$, hence by assumption, $k\leq d\leq e\sip (a\sip c)\leq k\leq k'\sip (a\sip c)$. From top to bottom, it is enough to instantiate $k: = d$ and $k': = e$.

 The condition above can be understood as a mode of transitive propagation of obligations under context: if $a$ normatively implies $f$ whenever $d$, and $b$ whenever $e$, and $b$ and $f$ together normatively imply $c$, then $c$ is also normatively implied by $a$ in the context of some $g$  that is normatively implied by $d\wedge e$. This condition can be understood as a generalization of the following principle:

{{
\small
    \begin{tabular}{lll}
 & $a\wedge d\prec f\ \& \ a\wedge e\prec b\ \& \ b\wedge f\prec c\ \implies  a\wedge (d\wedge e)\prec c$\\
 iff  & $a\leq d\sip f\ \& \ a\leq e\sip b\ \& \ b\leq  f\sip c\ \implies  a\leq (d\wedge e)\sip c$\\
 iff  & $a\leq d\sip f\ \& \ \exists b(a\leq e\sip b\ \& \ b\leq  f\sip c)\ \implies  a\leq (d\wedge e)\sip c$\\
  iff  & $a\leq d\sip f\ \& \ a\leq e\sip ( f\sip c)\ \implies  a\leq (d\wedge e)\sip c$\\
    iff  & $a\leq (d\sip f)\wedge ( e\sip ( f\sip c))\ \implies  a\leq (d\wedge e)\sip c$\\
       iff  & $ (d\sip f)\wedge ( e\sip ( f\sip c))\leq (d\wedge e)\sip c$\\
    \end{tabular}    
\par
}}

\noindent Interesting conditions can be also captured in terms of the normative counterparts of axioms defining intermediate logics, such as  the normative counterpart of the  {\em G\"odel-Dummett axiom}: 

{{
\small
\centering
    \begin{tabular}{cll}
  & $ \top \leq (a \sip b) \vee (b \sip a)$ \\
  iff & $  \exists e \exists f (\top \leq e \vee f \ \&\  e \leq  a \sip b \ \&\ f\leq b \sip a)$ & Prop. \ref{prop: compactness and existential ackermann}(ii)\\
     iff & $  \exists e \exists f (\top \leq e \vee f \ \&\  a \wedge e \prec b \ \&\  b \wedge f \prec a)$ & Prop. \ref{prop:back and forth ast}(i)\\
 \end{tabular}    
\par
}}
\noindent The  condition  above  requires there to be propositions $e$ and $f$ such that $e$ or $f$ is always the case and $a$ and $e$ normatively imply $b$ and  $b$ and $f$  normatively imply $a$.  This requirement can be seen as a generalization of the following dichotomy axiom for a normative system: $\forall a\forall  b(a \prec b \text{ or } b \prec a )$, i.e.~for all propositions $a$ and $b$, either   $a$ normatively implies $b$ or  $b$ normatively implies $a$.  

The following axiom encodes the distributivity of $\sip$  over disjunction in its second coordinate. 

{{
\small
\centering
    \begin{tabular}{cll}
  & $a \sip (b \vee c) \leq (a \sip b) \vee (a \sip c)$\\
iff & $d \leq a \sip (b \vee c) \implies d \leq  (a \sip b )\vee (a \sip c)$ & Prop. \ref{prop:background can-ext}(ii)\\ 
  iff & $ d \wedge a \prec b \vee c \implies  \exists e \exists f (d \leq e \vee f \, \& \,  e \leq a \sip b \, \& \, f \leq a \sip c )$ & Prop. \ref{prop: compactness and existential ackermann}(ii)\\
 iff & $  d \wedge  a \prec b \vee c \implies \exists e \exists f (d \leq e \vee f \ \& \ e \wedge a \prec b \ \& \ a \wedge f \prec c  )$ & Prop. \ref{prop:back and forth ast}(i)\\
 \end{tabular}    
\par
}}
\noindent This axiom characterizes a form of splitting into cases for  conditional obligations: if $a$
and $d$ normatively imply $b \vee c$ then some $e$ and $f$ exist s.t.~$d$ implies $e \vee f$ and $a$ and $e$ normatively imply  $b$ while $a$ and $f$ normatively imply  $c$. 

\section{Slanted co-Heyting algebras}
\label{sec: slanted co-heyting}
     \begin{definition}
    \label{def:slanted co-Heyting}
    A {\em slanted co-Heyting algebra} is a tuple $\mathbb{A} = (A, \sci)$ s.t.~$A$ is a distributive lattice, and $\sci: A\times A\to A^\delta$ s.t.~for all $a, b, c\in A$,
    \begin{enumerate}[nolistsep, noitemsep]
        \item $a\sci b\in K(A^\delta)$;
        \item $a\sci(b_1\vee b_2) = (a\sci b_1)\vee (a\sci b_2)$ and $a\sci\bot = \bot$;
        \item $(a_1\wedge a_2)\sci b = (a_1\sci b)\vee (a_2\sci b)$ and $\top\sci b = \bot$;
        \item $ a\sci b\leq c$ iff $\bot\sci b\leq a\vee c$.
    \end{enumerate}
The {\em canonical extension} of $\mathbb{A}$ is $\mathbb{A}^\delta = (A^\delta, \sci^\sigma)$, where $A^\delta$ is the canonical extension of $A$,  and $\sci^\sigma: A^\delta \times A^\delta \to A^\delta$  is defined as follows:
 for every $k\in K(A^\delta)$, $o\in O(A^\delta)$ and $u,v\in A^\delta$,
 
 {{
 \centering
 $ o \sci^\sigma  k: = \bigwedge\{ a \sci b   \mid  a \leq o, k \leq b, a,b \in A\}$
 \par
 }}

 {{
 \centering
 $u \sci^\sigma v : = \bigwedge \{ k \si^\sigma o \mid k \in K(A^\delta), o\in O(A^\delta), k \leq u, v \leq o\}$
 \par
 }}

\end{definition}
\noindent The map $\sci^\sigma$ extends $\sci$, distributes over arbitrary joins in its second coordinate, and distributes arbitrary meets to  joins in its first coordinate (cf.~\cite[Lemma 3.5]{de2021slanted}). In what follows, we will omit the superscript $^\sigma$, and rely on the arguments for disambiguation. Slanted co-Heyting algebras are also an equivalent presentation of subordination algebras.
\begin{definition}
\label{def:upper lower bullet}
  The slanted co-Heyting algebra associated with a subordination algebra $\mathbb{S} = (A, \prec)$ is the tuple $\mathbb{S}_\bullet\coloneqq (A, \scip)$, s.t.~for all $a, b\in S$,
  {\small  \begin{equation}
        \label{eq:definitionsci of prec}
    a\scip b: = \bigwedge\{c\in A\mid b\prec a\vee c\}.\end{equation}}
\noindent Hence,   $a \scip b$ represents the conjunction of all propositions  whose disjunction with $a$  is
 normatively implied by $b$. That is, $a\prec (a\scip b) \vee  b$, and $a\scip b$ holds, and is the strongest element of $A^\delta$ with this property. The subordination algebra associated with  $\mathbb{A} = (A, \sci)$ is  $\mathbb{A}^\bullet \coloneqq(A, \prec_{\sci})$ s.t.~for all $a, b\in A$,

{{
\centering
$a \prec_{\sci} b$ iff $\bot \sci a\leq b$. 
\par
}}
\end{definition}

The following proposition is dual to Proposition \ref{prop:back and forth ast}. 
\begin{proposition}
\label{prop:back and forth bullet}
    For any  subordination algebra $\mathbb{S} = (A, \prec)$, any slanted co-Heyting algebra $\mathbb{A} = (A, \sci)$, and all $a,b,c \in A$,  
    \begin{enumerate}[noitemsep,nolistsep]
        \item $ a\scip b  \leq c \quad \text{ iff } \quad b \prec a \vee c$;
        \item $\mathbb{S}_\bullet = (A, \scip)$ is a slanted co-Heyting algebra;
        \item $\mathbb{A}^\bullet = (A, \prec_{\sci})$ is a subordination algebra;
        \item $a {\prec_{\scip}}b$ iff $a\prec b$, and   $a\ {\sci_{\prec_{\sci}}}b = a\sci b$.
    \end{enumerate}
\end{proposition}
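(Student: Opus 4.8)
The statement is the order-dual of Proposition \ref{prop:back and forth ast}, and the cleanest route is to run the same argument with joins and closed elements in place of meets and open elements. Item (i) plays the role of the pseudo-adjunction and everything else is extracted from it, so I would establish it first. The right-to-left direction is immediate: if $b\prec a\vee c$ then $c$ belongs to the family defining $a\scip b$, whence $a\scip b=\bigwedge\{c'\in A\mid b\prec a\vee c'\}\leq c$. For left-to-right, since $a\scip b\in K(A^\delta)$ and $c\in A$, compactness yields a finite $F'\subseteq\{c'\mid b\prec a\vee c'\}$ with $\bigwedge F'\leq c$; applying (AND) across $F'$ together with distributivity ($a\vee\bigwedge F'=\bigwedge_{c'\in F'}(a\vee c')$) gives $b\prec a\vee\bigwedge F'$, and (WO) then yields $b\prec a\vee c$. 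This mirrors part (i) of Proposition \ref{prop:back and forth ast}, with compactness now applied on the closed side.

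For item (ii) I would verify the four clauses of Definition \ref{def:slanted co-Heyting}. Clause 1 holds by construction (a meet of clopens is closed). For the unit laws $a\scip\bot=\bot$ and $\top\scip b=\bot$, I would note that $\bot\prec a$ and $b\prec\top$ follow from ($\bot$-$\top$) and (WO-SI), so $\bot$ lies in the relevant defining families. The two distribution laws reduce, via Proposition \ref{prop:background can-ext}(i) (a closed element is determined by its clopen upper bounds) together with item (i), to the equivalences $b_1\vee b_2\prec a\vee c\iff b_1\prec a\vee c \ \&\ b_2\prec a\vee c$ and $b\prec(a_1\vee c)\wedge(a_2\vee c)\iff b\prec a_1\vee c \ \&\ b\prec a_2\vee c$, which follow from (SI)/(OR) and (WO)/(AND) respectively (the second using distributivity). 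Clause 4 is item (i) read twice, since both sides amount to $b\prec a\vee c$. Item (iii) is routine from the defining clause $a\prec_{\sci}b$ iff $\bot\sci a\leq b$: (WO) is immediate, and (SI) follows from monotonicity of $\sci$ in its second coordinate (a consequence of Definition \ref{def:slanted co-Heyting}.2); (AND) holds because $\bot\sci a\leq b_1,b_2$ gives $\bot\sci a\leq b_1\wedge b_2$; (OR) follows from $\bot\sci(a_1\vee a_2)=(\bot\sci a_1)\vee(\bot\sci a_2)$; and ($\bot$-$\top$) follows from $\bot\sci\bot=\bot$ and $\bot\sci\top\leq\top$.

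Finally, for item (iv) I would unfold the two round trips. For $a\prec_{\scip}b$ iff $a\prec b$, the definition gives $a\prec_{\scip}b$ iff $\bot\scip a\leq b$, and by item (i) this is $a\prec\bot\vee b=b$. For $a\ {\sci_{\prec_{\sci}}}b=a\sci b$, I would compare clopen upper bounds using Proposition \ref{prop:background can-ext}(i): by item (i) applied to $\prec_{\sci}$, $a\ {\sci_{\prec_{\sci}}}b\leq c$ iff $b\prec_{\sci}a\vee c$ iff $\bot\sci b\leq a\vee c$, which by Definition \ref{def:slanted co-Heyting}.4 is exactly $a\sci b\leq c$. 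The main obstacle, exactly as in Proposition \ref{prop:back and forth ast}, is item (i): it is the only place where compactness and the interaction of (AND) with distributivity are genuinely needed, and getting the finite-subfamily extraction together with the pull-out $a\vee\bigwedge F'=\bigwedge_{c'\in F'}(a\vee c')$ right is what makes the pseudo-adjunction work; once (i) is in hand, the remaining items are bookkeeping.
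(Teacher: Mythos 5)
Your proof is correct and matches the paper's intent: the paper gives no explicit proof, stating only that the proposition is dual to Proposition \ref{prop:back and forth ast}, and your argument is precisely the correct dualization of that proof (compactness on the closed side, with (AND) plus distributivity replacing (OR) plus distributivity in the key step of item (i), and the remaining items reduced to (i) via Proposition \ref{prop:background can-ext}(i)). No gaps.
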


The following lemma is dual to Lemma  \ref{lemma:sip and prec}.

\begin{lemma}
\label{lemma:scip and prec}
    For any slanted co-Heyting algebra $\mathbb{A} = (A, \sci)$, any $o,o' \in O(A^\delta)$,  $k \in K(A^\delta)$, and $u,v,w \in A^\delta$,
\begin{enumerate}[noitemsep,nolistsep]
    \item $k \sci o \in  K(A^\delta)$;
    \item $
o'\sci k\leq o\quad \text{ iff } \quad \exists a\exists b\exists c(a\sci b\leq c \ \& \  a\leq o'\ \&\ k\leq b \ \&\   c\leq o)$;

\item $
\bot \sci k\leq o'\vee o\quad \text{ iff } \quad \exists a\exists b\exists c(\bot \sci b\leq a\vee c \ \& \  a\leq o'\ \&\ k\leq b \ \&\   c\leq o)$;
\item $o'\sci k\leq o\quad $ iff $\quad \bot\sci k\leq o\vee o'$;
\item $w \sci u \leq v\quad $ iff $\quad \bot \sci u \leq w \vee v$.
\end{enumerate}
\end{lemma}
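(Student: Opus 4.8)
The plan is to establish all five clauses as the order-duals of the corresponding clauses of Lemma~\ref{lemma:sip and prec}. Concretely, $(A,\sci)$ is a slanted co-Heyting algebra precisely when $(A^{\partial},\sci)$ is a slanted Heyting algebra: under the order-reversing isomorphism $A\cong A^{\partial}$ the axioms of Definition~\ref{def:slanted co-Heyting} turn into those of Definition~\ref{def:slanted Heyting}, with $\wedge\leftrightarrow\vee$, $\top\leftrightarrow\bot$, $K(A^\delta)\leftrightarrow O(A^\delta)$, $\leq$ reversed, and $\sci^\sigma$ corresponding to $\si^\pi$. I would therefore replay the proof of Lemma~\ref{lemma:sip and prec} with every order-theoretic notion dualized, using Definition~\ref{def:slanted co-Heyting} in place of Definition~\ref{def:slanted Heyting} and with the roles of parts (1) and (2) of Proposition~\ref{prop: compactness and existential ackermann} interchanged. (Note that, matching its defining formula and the way it occurs throughout (ii)--(v), the closed-valued instance in clause~(i) is the open-first, closed-second value $o\sci k$.)

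For (i), I would read closed-valuedness off the defining formula $o\sci k=\bigwedge\{a\sci b\mid a\leq o,\ k\leq b,\ a,b\in A\}$: each $a\sci b$ lies in $K(A^\delta)$ by Definition~\ref{def:slanted co-Heyting}.(i), the index set is nonempty (take $a=\bot$, $b=\top$), and a meet of closed elements is closed by Proposition~\ref{prop: compactness and existential ackermann}, exactly dual to the join-of-opens argument used for Lemma~\ref{lemma:sip and prec}(i).

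For (ii) and (iii), I would expand $o'\sci k$ and $\bot\sci k$ through the defining meets of $\sci^\sigma$ and then apply compactness. The families $\{a\sci b\mid a\leq o',\ k\leq b\}$ and $\{\bot\sci b\mid k\leq b\}$ are down-directed, since $\sci$ is antitone in its first and monotone in its second coordinate, so an inequality of such a meet below an open element is witnessed by a single clopen triple (split across $o'$ and $o$ in (iii) via Proposition~\ref{prop: compactness and existential ackermann}(2)); this is the dual of the compactness step in Lemma~\ref{lemma:sip and prec}(ii)--(iii). Clause~(iv) is then immediate: the witnessing inequalities $a\sci b\leq c$ and $\bot\sci b\leq a\vee c$ delivered by (ii) and (iii) are interchangeable by Definition~\ref{def:slanted co-Heyting}.(iv), so $o'\sci k\leq o$ and $\bot\sci k\leq o\vee o'$ hold under the very same existential witnesses.

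For (v), I would unfold $w\sci u=\bigvee\{o\sci k\mid w\leq o,\ k\leq u\}$ (meet-to-join in the first coordinate, join in the second, by the distribution laws recorded after Definition~\ref{def:slanted co-Heyting}) and $v=\bigwedge\{o'\mid v\leq o'\}$ by denseness, turning $w\sci u\leq v$ into $\forall o,k,o'\,(w\leq o\ \&\ k\leq u\ \&\ v\leq o'\implies o\sci k\leq o')$; replace $o\sci k\leq o'$ by $\bot\sci k\leq o\vee o'$ using clause~(iv); and recombine via complete distributivity of $A^\delta$ (which holds since $A$ is distributive), rewriting $\bot\sci u=\bigvee\{\bot\sci k\mid k\leq u\}$ and $w\vee v=\bigwedge\{o\vee o'\mid w\leq o,\ v\leq o'\}$, to reach $\bot\sci u\leq w\vee v$. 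I expect this clause to be the main obstacle: it is the only point needing the infinitary distribution laws of $\sci^\sigma$ together with complete distributivity and the passage between ``$\bigvee\leq\bigwedge$'' and a universally quantified implication, and, since the displayed general formula for $\sci^\sigma$ must be read in its correct dual form, I would rely on the stated distribution properties of $\sci^\sigma$ rather than on that formula verbatim.
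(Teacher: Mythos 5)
Your proposal is correct and matches the paper's intent exactly: the paper gives no explicit proof, stating only that the lemma is dual to Lemma~\ref{lemma:sip and prec}, and your argument is precisely the careful dualization of that proof (including the right use of Definition~\ref{def:slanted co-Heyting}.(iv) for clause (iv) and of complete distributivity plus the infinitary distribution laws of $\sci^\sigma$ for clause (v)). Your observations that clause (i) should be read as $o\sci k$ and that the displayed general formula for $\sci^\sigma$ must be taken in its corrected dual form are both accurate readings of what the paper intends.
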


\section{Slanted pseudo-complements and co-complements}\label{sec:slanted negations}
Any slanted (co-)Heyting algebra based on a distributive lattice $A$ induces the  operation $\sn: A\to A^\delta$ (resp.~$\scn: A\to A^\delta$) defined by the assignment 
 $a\mapsto a\si \bot$ (resp.~$a \mapsto \top \sci a $). When ${\si} = {\sip}$ (resp.~${\sci} = {\scip}$) for some  $\prec$ on $A$, it immediately follows from \eqref{eq:definitionsi of prec} and \eqref{eq:definitionsci of prec} that, for any $a\in A$,

{{
\centering
$\sn a = \bigvee\{c\in A\mid  a\wedge c\prec \bot\}\quad \quad  \scn a = \bigwedge\{c\in A\mid \top\prec a\vee c\}$.
\par
}}

\noindent In particular,

{{
\centering
$\label{eq:neg top and sim bot}
  \sn \top = \bigvee\{c\in A\mid  c\prec \bot\}\quad \quad  \scn \bot = \bigwedge\{c\in A\mid \top\prec  c\}$.
\par
}}

The canonical extensions $\sn^\pi$ and $\scn^\sigma$ of maps $\sn$ and $\scn$ are
defined as follows: for any $k \in K(A^\delta)$, $o \in O(A^\delta)$, $u \in A^\delta$, 

{{\centering
$\sn^\pi k : = \bigvee\{ \sn a \mid k \leq a, a \in A\}$ and $\sn^\pi  u : = \bigwedge \{\sn k  \mid k \leq u\}$
\par
}}

{{\centering
$\scn^\sigma o : = \bigwedge\{ \scn a \mid a \leq o, a \in A\}$ and $\scn^\sigma  u : = \bigvee \{\scn o  \mid u \leq o\}$
\par
}}

\noindent We will typically omit the superscripts $^\sigma$ and $^\pi$, and rely on the arguments for disambiguation. Also, we omit the subscript $_\prec$ even when $\neg$ and $\sim$ arise from $\sip$ and $\scip$.
The next lemma is straightforward from the definitions. 

\begin{lemma}
    For any 
    $u \in A^\delta$, $\sn u = u \si \bot$ and 
   $\scn u = u  \sci \top$.
\end{lemma}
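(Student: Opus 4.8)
The plan is to prove the two identities by unfolding the definition of the pseudo-(co-)complement and the definition of the $\pi$- (resp.\ $\sigma$-)extension, and then observing that they collapse to the same expression once we use that $\bot$ (resp.\ $\top$) is a clopen, and hence least (resp.\ greatest), element of $A^\delta$. I would carry out the identity $\sn u = u\si\bot$ in full, and obtain $\scn u = u\sci\top$ by the order-dual argument (reading opens for closeds, joins for meets, and $\top$ for $\bot$).

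First I would settle the case of closed arguments. Fix $k\in K(A^\delta)$. Since $\bot\in A$, it is clopen, so in particular $\bot\in O(A^\delta)$ and the closed--open clause of Definition \ref{def:slanted Heyting} applies directly, giving
\[
k\si\bot=\bigvee\{a\si b\mid k\leq a,\ b\leq\bot,\ a,b\in A\}=\bigvee\{a\si\bot\mid k\leq a,\ a\in A\},
\]
because $b\leq\bot$ forces $b=\bot$. By the definition of $\sn$ on clopens and of its $\pi$-extension on closed elements, the right-hand side is exactly $\bigvee\{\sn a\mid k\leq a\}=\sn k$. Hence $k\si\bot=\sn k$ for every $k\in K(A^\delta)$.

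Next I would pass to an arbitrary $u\in A^\delta$ through the general clause $u\si\bot=\bigwedge\{k\si o\mid k\leq u,\ \bot\leq o\}$. Two simplifications occur here: the constraint $\bot\leq o$ is vacuous since $\bot$ is least, and since $\si^\pi$ is monotone in its second coordinate (it distributes over arbitrary meets, cf.\ the remark after Definition \ref{def:slanted Heyting}) we have $k\si\bot\leq k\si o$ for every $o\in O(A^\delta)$. Thus, for each fixed $k$, the inner meet over $o$ is attained at $o=\bot$, and the double meet collapses to
\[
u\si\bot=\bigwedge\{k\si\bot\mid k\leq u\}=\bigwedge\{\sn k\mid k\leq u\}=\sn u,
\]
the last equality being the definition of the $\pi$-extension of $\sn$ on $A^\delta$. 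The identity $\scn u=u\sci\top$ then follows by the dual computation: $\top$ is clopen and greatest, $\sci^\sigma$ is monotone in its second coordinate, so $o\sci\top=\scn o$ for opens $o$ and the join over closed arguments collapses at $\top$.

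The only genuinely delicate point is the first step, namely the legitimacy of evaluating $k\si\bot$ via the closed--open clause rather than the general one: this presupposes that the two clauses of the $\pi$-extension agree on closed--open pairs. This is the standard consistency property of the $\pi$-extension (cf.\ \cite[Lemma 3.5]{de2021slanted}); concretely, it holds because the closed--open value $(k,o)\mapsto\bigvee\{a\si b\mid k\leq a,\ b\leq o\}$ is antitone in $k$ and monotone in $o$ (by items 2 and 3 of Definition \ref{def:slanted Heyting}), so that the meet defining the general clause is already attained at the pair $(k,o)$ itself. Everything else is routine unfolding of definitions.
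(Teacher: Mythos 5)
Your proof is correct and is essentially the argument the paper intends: the paper states this lemma without proof, declaring it ``straightforward from the definitions,'' and your unfolding --- including the check that the two clauses of the $\pi$-extension agree on closed--open pairs, and the collapse of the meet at $o=\bot$ by monotonicity in the second coordinate --- is exactly that routine verification carried out carefully.
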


 \noindent Intuitively, $\sn a $  represents the disjunction of all propositions which are normatively inconsistent with $a$. That is, $a \wedge \sn a \prec \bot$ and $\sn a $  is the weakest element of $A^\delta$ with this property. Likewise,  $\scn a$ represents the conjunction  of all propositions whose disjunction with  $a$  is an unconditional obligation. That is, $\top \prec a \vee \scn a$ and $\scn a $  is the strongest element of $A^\delta$ with this property. In particular,  $\sn\top$ denotes the weakest normatively inconsistent element   in $A^\delta$, while $\scn\bot$ denotes the strongest unconditional obligation  in $A^\delta$. 

\noindent The following lemma is a straightforward consequence of Lemmas \ref{lemma:sip and prec} and \ref{lemma:scip and prec}.
\begin{lemma}\label{lem:negation properties}
    For all $a, c\in A$, and $k\in K(A^\delta)$ and $o\in O(A^\delta)$,
    \begin{enumerate}[noitemsep,nolistsep]
        \item $c\leq \sn a$ iff $a\wedge c\prec \bot\quad $ and 
        $\quad\scn a\leq c$ iff $\top\prec c\vee a$; 
        \item 
        $ a \leq \sn b$ iff $b \leq \sn a\quad$ and 
        $\quad\scn a \leq b$ iff $\scn b \leq a$;
         \item $k'\leq \sn k$ iff $\exists a\exists c (k'\leq c\ \&\ k\leq a\ \&\ c\leq \sn a)$;
        \item $\scn o\leq o'$ iff $\exists a\exists c(a\leq o\ \&\ c\leq o'\ \&\ \scn a\leq c)$.
        
    \end{enumerate}
\end{lemma}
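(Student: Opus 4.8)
The plan is to prove each of the four items by unwinding the definitions of $\sn$ and $\scn$ as the slanted operations $a\si\bot$ and $a\sci\top$, and then applying the corresponding items of Lemmas \ref{lemma:sip and prec} and \ref{lemma:scip and prec} with the second (resp.\ first) argument instantiated to $\bot$ (resp.\ $\top$). Since the statement itself declares the lemma to be ``a straightforward consequence'' of those two lemmas, the work is purely substitutional; I expect no genuine obstacle, only bookkeeping to keep the $\si$-side and the $\sci$-side arguments dual but separate.

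First I would handle item (i), the two characterisations. For the left conjunct, $\sn a = a\si\bot$ by the preceding lemma, so $c\leq\sn a$ iff $c\leq a\si\bot$, which by Proposition \ref{prop:back and forth ast}(i) (equivalently Lemma \ref{lemma:sip and prec}(v) with $v:=\bot$) is $a\wedge c\prec\bot$. Dually, $\scn a = \top\sci a$, so $\scn a\leq c$ iff $\top\sci a\leq c$, which by Proposition \ref{prop:back and forth bullet}(i) is $a\prec \top\vee c$; but $a\prec\top$ always, so this should be massaged into $\top\prec c\vee a$ — here I must be careful, since the displayed form is $\top\prec c\vee a$, so I would instead read $\scn a=\top\sci a$ through Lemma \ref{lemma:scip and prec}(v), obtaining $\top\sci a\leq c$ iff $\bot\sci a\leq\top\vee c$, and then invoke the defining identity $\scn a=\bigwedge\{c\mid\top\prec a\vee c\}$ directly from the displayed definition in Section \ref{sec:slanted negations}, together with compactness, to land on $\top\prec c\vee a$.

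Next, item (ii) records the symmetry of the two pseudo-complements. For $\sn$, by item (i) we have $a\leq\sn b$ iff $b\wedge a\prec\bot$, and $b\leq\sn a$ iff $a\wedge b\prec\bot$; these right-hand sides coincide by commutativity of $\wedge$, giving the equivalence immediately. The $\scn$ case is entirely dual: $\scn a\leq b$ iff $\top\prec b\vee a$ and $\scn b\leq a$ iff $\top\prec a\vee b$, again identical by commutativity of $\vee$. Then items (iii) and (iv) are the ``closed/open'' refinements: for (iii) I would unfold $\sn^\pi k=\bigvee\{\sn a\mid k\leq a\}$ and apply compactness exactly as in Lemma \ref{lemma:sip and prec}(ii), with the second coordinate fixed at $\bot$ so that the existential over $b$ collapses; for (iv) I would symmetrically unfold $\scn^\sigma o=\bigwedge\{\scn a\mid a\leq o\}$ and apply compactness as in Lemma \ref{lemma:scip and prec}(ii).

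The only point demanding attention, rather than an obstacle, is the orientation of the quantified variables and the $\vee$/$\wedge$ sides in the $\scn$ clauses: the open/closed duality swaps the roles of $k$ and $o$ and of meet and join, so I would carry the $\sn$ column and the $\scn$ column through in parallel to avoid a sign or side error. Concretely, for (iii) the witnesses $a\in A$ and $c\in A$ satisfy $k\leq a$ and $k'\leq c$ with $c\leq\sn a$, mirroring the $k'\leq c\si b$ pattern of Lemma \ref{lemma:sip and prec}(ii) at $b=\bot$; for (iv) the witnesses satisfy $a\leq o$ and $c\leq o'$ with $\scn a\leq c$, mirroring Lemma \ref{lemma:scip and prec}(ii) at the top element. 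Once the substitutions are made, each equivalence is immediate, so the proof is short.
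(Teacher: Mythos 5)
Your proposal is correct and follows essentially the same route as the paper: unfold $\sn a = a\si\bot$ and $\scn a = a\sci\top$, apply Proposition \ref{prop:back and forth ast}(i) and Proposition \ref{prop:back and forth bullet}(i) for item (i), commutativity of $\wedge$ and $\vee$ for item (ii), and Lemmas \ref{lemma:sip and prec}(ii) and \ref{lemma:scip and prec}(ii) with the inner coordinate instantiated at $\bot$ (resp.\ $\top$) for items (iii) and (iv).

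The one spot to clean up is the $\scn$ half of item (i). The correct unfolding is $\scn a = a\sci\top$, as stated in the lemma immediately preceding this one and as forced by the displayed identity $\scn a = \bigwedge\{c\in A\mid \top\prec a\vee c\}$ together with \eqref{eq:definitionsci of prec}; the in-text assignment ``$a\mapsto \top\sci a$'' is a typo (indeed $\top\sci a = \bot$ identically by Definition \ref{def:slanted co-Heyting}.3). Reading $\scn a$ as $\top\sci a$ is what produces your spurious intermediate ``$\scn a\leq c$ iff $a\prec\top\vee c$'', and the subsequent detour ``$\top\sci a\leq c$ iff $\bot\sci a\leq\top\vee c$'' is likewise vacuous, since $\top\vee c=\top$ makes both sides trivially true; neither of these is a correct characterisation and both should be dropped. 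With the right orientation, Proposition \ref{prop:back and forth bullet}(i) applied to $a\scip\top\leq c$ yields $\top\prec a\vee c$ in one step, with no massaging; your fallback via the displayed definition plus compactness, (AND) and (WO) also works, but it amounts to reproving that proposition. Everything else, including the commutativity argument for item (ii) (which the paper leaves implicit), is fine.
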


\section{More examples}
\label{sec:more examples}
Condition  $a \prec \bot \implies a \leq \bot$  can be understood as the property of the normative system $\prec$ that  any consistent (proposition)  cannot  yield a normative inconsistency. This condition can be axiomatically captured as follows

{{
\small
\centering

\begin{tabular}{cll}

 & $a \prec \bot \implies a \leq \bot$ & \\
 iff & $a \leq \neg \top   \implies a \leq \bot$ & Lemma \ref{lem:negation properties}(i)\\
 iff & $\sn \top \leq \bot$ & Prop. \ref{prop:background can-ext} (ii)\\
 \end{tabular}  
\par
}}
\noindent The same condition can be also captured 
by  $a \wedge \sn a  \leq \bot$, as shown by the following computation:

{{\centering
\small
\begin{tabular}{cll}
& $a \wedge \sn a  \leq \bot$ & \\
 iff & $b  \leq a \wedge \neg a   \implies b \leq \bot$  & Prop. \ref{prop:background can-ext} (ii) \\
 iff &  $b  \leq a \, \& \,  b \leq \neg a    \implies b \leq \bot$ &   \\
 iff  &  $b  \leq a \, \& \,  b \wedge a  \prec \bot   \implies b \leq \bot$ &  Lemma \ref{lem:negation properties}(i)\\ 
 iff  &  $  b  \prec \bot   \implies b \leq \bot$ & $b\leq a$ iff $b = b\wedge a$ \\
 \end{tabular}    
\par
}}


\noindent The next example is normative counterpart of De Morgan axiom: 

{{
\small
\centering
\begin{tabular}{cll}
& $\sn (a \wedge b) \leq \sn a \vee \sn b $ & \\

 iff &  $d \leq \sn (a \wedge b) \implies d \leq \sn a \vee \sn b $   & Prop. \ref{prop:background can-ext} (ii) \\

 iff &  $d \leq \sn (a \wedge b) \implies \exists e \exists f (d \leq e \vee f \, \&, \,
 e \leq \sn a \, \& \, f \leq  \sn b)$   &  Prop. \ref{prop: compactness and existential ackermann}(ii)\\
 
 iff &  $d \wedge a \wedge b \prec \bot \implies \exists e \exists f (  d \leq e \vee f \, \&, \,
 e \wedge  a  \prec \bot \, \& \, f \wedge b \prec \bot)$   & Lemma \ref{lem:negation properties} (i)\\
 \end{tabular}    
\par
}}

\noindent The condition above  can be interpreted as saying that if $a, b, d$ are normatively inconsistent, then $d$ implies $e \vee f$ s.t. $e$ leads to normative inconsistency along with $a$ and $f$ leads  to normative inconsistency along with $b$. 

The next example is the normative counterpart of contraposition:

{{
\centering
\small
         \begin{tabular}{cll}
&  $(a \sip b) \leq (\neg b \sip \neg a) $& \\
 iff & $k\leq (a \sip b)\ \& \ j\leq \neg b \implies  (k \leq j \sip \neg a)$& denseness\\
  iff & $c\leq (a \sip b)\ \& \ d\leq \neg b \implies  (c \leq d \sip \neg a)$ &compactness\\
    iff & $c\wedge a \prec b\ \& \ d\wedge b\prec \bot \implies  (c \leq d \sip  \neg a)$  &Prop. \ref{prop:back and forth ast}, Lem. \ref{lem:negation properties}(i) \\ 
           iff & $c\wedge a \prec b\ \& \ d\wedge b\prec \bot \implies \exists e (c \leq d \sip e\ \&\ e\leq \neg a)$  &compactness \\
            iff & $c\wedge a \prec b\ \& \ d\wedge b\prec \bot \implies \exists e (c \wedge d \prec e\ \&\ e\wedge a\prec \bot)$  & Lem. \ref{lem:negation properties}(i)\\
         \end{tabular}    
\par
}}

\noindent The condition above says that, if for all $a$, $c$ and $d$ some $b$ exists  s.t.~$c\wedge a$  normatively imply $b$, and $b\wedge d$  are normatively inconsistent, then some $e$ exists  s.t.~$c\wedge d$  normatively imply $e$ and $e$ and $a$ together give a normative inconsistency. This condition generalizes the following principle:

{{
\small
\centering
           \begin{tabular}{cll}
                &   $a \prec b \, \& \, d \wedge b \prec \bot \implies \exists e (d \prec e  \, \& \, a \wedge e \prec \bot)$ \\
        iff  & $ \exists b (a \leq  \top  \sip b \, \& \, b \leq \sn d) \implies \exists e (d \leq  \top  \sip e \, \& \, e \leq \sn a )$& def. of $\sn$\\
        iff  & $a \leq  \top  \sip \sn d  \implies d \leq \top \sip \sn a$& denseness\\
        iff  & $a \leq  \top  \sip \sn d  \implies d \circ \top \leq \sn a$& residuation\\
        iff  & $a \leq  \top  \sip \sn d  \implies a \leq \sn (d \circ \top) $& Lemma \ref{lem:negation properties} (ii)\\
         iff  & $ \top  \sip \sn d   \leq \sn (d \circ \top) $ &Prop. \ref{prop:background can-ext}(ii)\\
         iff & $d \circ \top \leq \sn (\top  \sip \sn d)$&Lemma \ref{lem:negation properties} (ii)\\
          iff & $d \leq \top \sip \sn (\top  \sip \sn d)$& residuation\\
           \end{tabular}
 \par
 }}
\noindent where, in the computation above, $\circ$ denotes the left residual of $\sip^\pi$, which is defined as follows: $u\circ v = \bigwedge \{w\mid v\leq u\sip w \}$ for all $u, v, w\in A^\delta$.
%
%
The condition above can be interpreted as the requirement that, for all $a$ and $d$, if $a$ normatively implies some $b$ which is normatively inconsistent with $d$, then $d$ normatively implies some $e$ which is normatively inconsistent with $a$.

\section{Correspondence and inverse correspondence}\label{sec: correspondence inverse corr}

The examples discussed in Sections \ref{sec:examples on sip} and \ref{sec:more examples} are not isolated cases: in \cite{de2024correspondence}, the  class of  {\em (clopen-)analytic} axioms/inequalities is identified, each  of which is shown to be equivalent to some condition on norms, permissions, or their interaction. Conversely, a class of such conditions is identified (referred to as {\em Kracht formulas}), each  of which  is shown to be equivalent to some modal axioms. In this section, we discuss how  these results can be extended to the language  of (distributive) lattices with slanted (co-)Heyting implications,\footnote{Since the current signature is particularly simple, the definition of clopen-analytic inequality collapses to that of analytic inequality.} and show, via examples, that this language allows us to (algorithmically) capture conditions which cannot be captured by modal axioms of the language of \cite{de2024correspondence} with the same techniques. 
 
\subsection{Correspondence}
Let $\mathcal{L}$ be the language  of distributive lattices expanded with one slanted Heyting (co-)implication. To characterize syntactically the class of $\mathcal{L}$-inequalities which are guaranteed to be algorithmically transformed into conditions on normative systems, we adapt the notion of {\em analytic $\mathcal{L}$-inequalities} of \cite[Section 2.5]{de2024correspondence}. 

The \emph{positive} (resp.~\emph{negative}) {\em generation tree} of any $\mathcal{L}$-term $\phi$ is defined by labelling the root node of the generation tree  of $\phi$ with the sign $+$ (resp.~$-$), and then propagating the labelling on each remaining node as follows:
        For any node labelled with $ \lor$ or $\land$, assign the same sign to its children nodes, and 
        for any node labelled with $\si$ or $\sci$, assign the opposite sign to its first child node, and the same sign to its second child node.
	Nodes in signed generation trees are \emph{positive} (resp.~\emph{negative}) if they are signed $+$ (resp.~$-$). In the context of term inequalities $\varphi\leq \psi$, we consider the positive generation tree $+\varphi$ for the left side and the negative one $-\psi$ for the right side.
Non-leaf nodes in signed generation trees are called \emph{$\Delta$-adjoints}, \emph{syntactically left residuals (SLR)}, \emph{syntactically right residuals (SRR)}, and \emph{syntactically right adjoints (SRA)}, according to the specification given in the table below 
	Nodes that are either classified as  $\Delta$-adjoints or SLR are collectively referred to as {\em Skeleton-nodes}, while SRA- and SRR-nodes are referred to as {\em PIA-nodes}. 
	A branch in a signed generation tree $\ast \phi$, with $\ast \in \{+, - \}$, is  \emph{good} if it is the concatenation of two paths $P_1$ and $P_2$, one of which may possibly be of length $0$, such that $P_1$ is a path from the leaf consisting (apart from variable nodes) only of PIA-nodes, and $P_2$ consists (apart from variable nodes) only of Skeleton-nodes. An $\mathcal{L}$-inequality $\varphi \leq \psi$ is \emph{analytic} if every branch of $+\varphi$ and $-\psi$ is good.

    \begin{table}[h]
    {\footnotesize
		\begin{center}
			\bgroup
			\def\arraystretch{1.2}
			\begin{tabular}{| c | c |}
            
				\hline
				Skeleton &PIA\\
				\hline
				$\Delta$-adjoints & Syntactically Right Adjoint (SRA) \\
				\begin{tabular}{ c c c c}
					$+$ & & $\vee$ & \\
					$-$ & & $\wedge$ \\
				\end{tabular}
				&
				\begin{tabular}{c c c c }
					$+$& &$\wedge$ &  $\sn$ \\
					$-$ & &$\vee$ & $\scn$ \\

				\end{tabular}
				\\ \hline
				Syntactically Left Residual (SLR) & Syntactically Right Residual (SRR) \\
				\begin{tabular}{c c c c c}
					$+$ &  &$ \wedge$ & $\sci $    & $\sn$ \\
					$-$ &  &$\vee$ & $\si$ & $\scn$  \\
				\end{tabular}
				&\begin{tabular}{c c c c c }
				$+$ &  &$ \vee$ & $\si$ \\	
                    $-$ &  &$ \wedge$& $\sci$ 		
				\end{tabular}
				\\
				\hline
			\end{tabular}
			\egroup
		\end{center}
        \label{Join:and:Meet:Friendly:Table}
		\vspace{-1em}
    }
	\end{table}
    
%
%
 Based on the properties discussed in Sections \ref{sec:slanted Heyting} and \ref{sec: slanted co-heyting}, the algorithm of \cite[Section 3]{de2024correspondence} can succesfully be run also on analytic $\mathcal{L}$-inequalities; hence, the analogue of \cite[Theorem 3.1]{de2024correspondence} holds for analytic $\mathcal{L}$-inequalities. All inequalities discussed  in Sections \ref{sec:examples on sip} and \ref{sec:more examples} are analytic $\mathcal{L}$-inequalities, and the chains of equivalences discussed there represent runs of the correspondence algorithm.

\subsection{Inverse correspondence}

The following abbreviations will be used throughout the present section:

{\footnotesize
{{\centering
\begin{tabular}{lclcl}
$(\exists y \succ x)\varphi$ & $\equiv$ & $\exists y(x \prec y \ \& \ \varphi)$ & 
i.e.& $\exists y(x \leq \top\sip y \ \& \ \varphi)$ \\
$(\exists y \prec x)\varphi$ & $\equiv$ & $\exists y(y \prec x \ \& \ \varphi)$ & 
i.e. &$\exists y(\bot\scip  y \leq x \ \& \ \varphi)$ \\
$(\forall y \succ x)\varphi$ & $\equiv$ & $\forall y(x \prec y \ \implies \ \varphi)$ & 
i.e.& $\forall y(x \leq \top\sip y \ \implies \ \varphi)$ \\
$(\forall y \prec x)\varphi$ & $\equiv$ & $\forall y(y \prec x \ \implies \ \varphi)$ & 
i.e. &$\forall y(\bot\scip y \leq x \ \implies \ \varphi)$ \\
$(\exists \overline y \leq_\vee x)\varphi$ &
$\equiv$ &
$\exists y_1\exists y_2(x \leq y_1 \vee y_2 \ \& \ \varphi)$ \\
$(\exists \overline y \leq_\wedge x)\varphi$ &
$\equiv$ &
$
\exists y_1\exists y_2(y_1 \wedge y_2 \leq x \ \& \ \varphi) $\\
$(\forall \overline y \leq_\vee x)\varphi$ &
$\equiv$ &
$
\forall y_1\forall y_2(x \leq y_1 \vee y_2 \Longrightarrow \varphi)$ \\

$(\forall \overline y \leq_\wedge x)\varphi $ &
$\equiv$ &
$
\forall y_1\forall y_2(y_1 \wedge y_2 \leq x \Longrightarrow \varphi)$. \\
\end{tabular}

}}
{{\centering
\begin{tabular}{lclcl}
$(\exists \overline{y} \leq_{\scip} x)\varphi$ &  $\equiv$  & $\exists  y_1\exists y_2(y_2\prec y_1 \vee x \ \& \ \varphi)$ & i.e. & $\exists  y_1\exists y_2(y_1\scip y_2 \leq x \ \& \ \varphi)$\\
$(\exists \overline{y} \geq_{\sip} x)\varphi$ & $\equiv$ & $\exists y_1\exists y_2(x \wedge y_1\prec  y_2 \ \& \ \varphi)$ & i.e. & $\exists y_1\exists y_2(x \leq y_1\sip  y_2 \ \& \ \varphi)$\\
$(\forall \overline{y} \leq_{\scip} x)\varphi$ &  $\equiv$  & $\forall  y_1\forall y_2(y_2\prec y_1 \vee x \implies \varphi)$ & i.e. & $\forall  y_1\forall y_2(y_1\scip y_2 \leq x \implies \varphi)$\\
$(\forall \overline{y} \geq_{\sip} x)\varphi$ & $\equiv$ & $\forall y_1\forall y_2(x \wedge y_1\prec  y_2 \implies \varphi)$ & i.e. & $\forall y_1\forall y_2(x \leq y_1\sip  y_2 \implies \varphi)$
\end{tabular}
}}
}
Expressions  such as $(\forall y\prec x)$ or $(\exists\overline y\geq_g x)$ above are referred to as {\em restricted quantifiers}.
The variable $y$ (resp.~the variables in $\overline y$)  in the formulas above is (resp.\ are) {\em restricted}, and the variable $x$ is {\em restricting}, while the inequality occurring together with $\varphi$ in the translation of the restricted quantifier is a {\em restricting inequality}.
Throughout this section, we use the following letters to distinguish the roles of different variables ranging in the domain of an arbitrary slanted (co-)Heyting algebra.  The different conditions assigned to these variables in Definition \ref{def:inverse_shape} will determine how they are introduced/eliminated:

\begin{itemize}[noitemsep,nolistsep]
\item[$v$] variables occurring in the algebraic axiom;
\item[$a$] positive variables introduced/eliminated using Proposition \ref{prop:background can-ext}(ii). It must be possible to rewrite the quasi-inequality so that each such variable  occurs only once on each side of the main implication;
\item[$b$] negative variables introduced/eliminated using Proposition \ref{prop:background can-ext}(i). The same considerations which apply to $a$-variables apply also to $b$-variables;
\item[$c$] variables introduced/eliminated  using Proposition \ref{prop: compactness and existential ackermann} or Lemmas \ref{lemma:sip and prec},  \ref{lemma:scip and prec},  \ref{lem:negation properties}. in the antecedent of the main implication;
\item[$d$] variables introduced/eliminated in the same way as $c$-variables in the consequent of the main implication.
\end{itemize}

\noindent The following definition adapts \cite[Definition 5.2]{de2024correspondence} to the present environment.
\begin{definition}
\label{def:inverse_shape}
A  {\em Kracht formula}\footnote{In the modal logic literature, Kracht formulas (cf.~\cite[Section 3.7]{blackburn2001modal}, \cite{kracht1999tools}) are  sentences in the first order language of Kripke frames which are (equivalent to) the first-order correspondents of  Sahlqvist axioms. This notion has been generalized in \cite{palmigiano2024unified,conradie2022unified} from classical modal logic to (distributive) LE-logics, and from a class of first order formulas targeting Sahlqvist LE-axioms to a class targeting the strictly larger class of inductive LE-axioms. The notion of Kracht formulas introduced in Definition \ref{def:inverse_shape} is different and in fact incomparable with those in \cite{palmigiano2024unified,conradie2022unified}, since it targets a different and incomparable class of modal axioms.} is a condition of the following shape:

{{
\centering
$\forall \overline v\forall \overline a, \overline b 
(\forall \overline{c_m}R'_m z_m)\cdots(\forall \overline{c_1} R'_1 z_1)
\left(
\eta
\Rightarrow
(\exists \overline{d_o} R_o y_o)\cdots(\exists \overline{d_1} R_1 y_1)
\zeta
\right)$, where
\par
}}

\begin{enumerate}[noitemsep,nolistsep]
\item $R'_i, R_j\in \{\leq, \geq, \prec, \succ, \leq_{\scip}, \geq_{\sip} \}$ for all $1\leq i\leq m$ and $1\leq j\leq o$;
\item variables in $\overline z$ are amongst those of $\overline a$, $\overline b$, and $\overline c $; 
variables in $\overline y$ are amongst those of $\overline a$, $\overline b$, and $\overline d$;
\item $\eta$ and $\zeta$ are conjunctions of relational atoms $sRt$ with $R\in \{ \leq, \geq, \prec, \succ \}$;
\item all occurrences of variables in $\overline a$ (resp.\ $\overline b$) 
are positive (resp.\ negative) in all atoms (including those in restricting quantifiers) in which they occur;

\item every variable in $\overline c$ (resp.\ in $\overline d$) occurs uniformly in $\eta$ (resp.~in $\zeta$);

\item occurrences of variables in $\overline c$ (resp.\ $\overline d$) as restricting variables have the same polarity as their occurrences in $\eta$ (resp.\ $\zeta$).

\item all occurrences of variables in $\overline a$, $\overline b$, $\overline c$, and $\overline d$ in atoms of  $\eta$ and $\zeta$ are displayable;

\item each atom in $\eta$ contains exactly one occurrence of a variable in $\overline a$, $\overline b$, or $\overline c$ (all other variables are in $\overline v$);

\item each atom $sRt$ in $\zeta$ contains at most one occurrence of a variable in $\overline d$. Moreover, for every two different occurrences of the same variable not in $\overline v$ (i.e., in $\overline a$ or $\overline b$), the first common ancestor in the signed generation tree of $sRt$ is either $+\wedge$ or $-\vee$.
\end{enumerate}
\end{definition}

For instance, the following condition (see last example of Section \ref{sec:examples on sip}):

{{
\centering
$  d \wedge  a \prec b \vee c \implies \exists e \exists f (d \leq e \vee f \ \& \ e \wedge a \prec b \ \& \ a \wedge f \prec c  )$
\par
}}

can be recognized as an instance of the  Kracht shape in the language of slanted Heyting algebras by assigning variables $e$ and $f$ the role of $\overline{d}$-variables, variable $d$ the role of $\overline{a}$-variable, and variables $a, b, c$ the role of $\overline{v}$-variables, and moreover, letting $d\leq e\vee f$ be the restricting inequality in the consequent, and $\eta: = d \wedge  a \prec b \vee c $ and $\zeta: = e \wedge a \prec b \ \& \ a \wedge f \prec c$.
The algorithm introduced in \cite[Section 6]{de2024correspondence} allows us to equivalently represent this condition as an axiom in the language of slanted Heyting algebras as follows:
{\footnotesize
{{
\centering
    \begin{tabular}{cll}
    & $  d \wedge  a \prec b \vee c \implies \exists e \exists f (d \leq e \vee f \ \& \ e \wedge a \prec b \ \& \ a \wedge f \prec c  )$ & \\
    iff & $d\leq a\sip (b\vee c)\implies \exists e \exists f (d \leq e \vee f \ \& \ e \leq a \sip b \ \& \ f \leq a \sip c  ) $ & Prop. \ref{prop:back and forth ast}(i)\\
    iff & $d \leq a \sip (b \vee c) \implies d \leq  (a \sip b )\vee (a \sip c)$ & compactness \\ 
iff   & $a \sip (b \vee c) \leq (a \sip b) \vee (a \sip c)$ &  Prop. \ref{prop:background can-ext}(ii)\\
 \end{tabular}    
\par
}}
}
\noindent Notice that the condition above 
would {\em not} qualify as a Kracht formula when the intended target propositional language is the one introduced in \cite{de2024correspondence}; this is because condition (vii) of Definition \ref{def:inverse_shape} requires all occurrences of  $\overline{a}$-type variables in $\eta$ and $\zeta$ be {\em displayable} when translated as inequalities (i.e.~they occur in isolation  on one side of the inequality). This requirement is satisfied when translating $d \wedge  a \prec b \vee c $ as $d\leq a\sip(b\vee c)$ as we did above; however, when targeting the language of \cite{de2024correspondence}, the atomic formula $d \wedge  a \prec b \vee c $ can be translated either as $\Diamond (d\wedge a)\leq b\vee c$ or as $d\wedge a\leq \blacksquare (b\vee d)$, and in each case, since  conjunction is not in general residuated in the language of (modal) distributive lattices, that occurrence of $d$ is not displayable, which implies that the general algorithm for computing the equivalent axiom will halt and report failure.
A similar argument shows that the condition   which was shown to be equivalent to the `G\"odel-Dummet axiom' in Section \ref{sec:examples on sip} would also violate item (vii) of Definition \ref{def:inverse_shape} when the intended target propositional language is the one introduced in \cite{de2024correspondence}. 
\noindent The examples above show that 
this language contributes to widen the scope of logical/algebraic characterizations of conditions on normative systems in a principled way.

\section{Modelling deontic reasoning}\label{sec:Models}
 Earlier on, we discussed how $a \sip b$ can be understood as the weakest side condition, or {\em context}, under which $a$ normatively implies $b$, and hence, adding  $\sip$ makes the language capable to describe situations in which obligations and permissions may change based on context, capacity, or other factors.

Also, in many real-life situations, different conditional obligations have different levels of priority based e.g.~on  urgency, ethics, or legal requirements. For example, `if a hospital is overcrowded, doctors should treat patients based on urgency, although, by hospital policy, patients should be visited in order of arrival.' In law, `if a lawyer learns confidential information that could prevent a serious crime, they should report it, even though client confidentiality is generally a top priority.'  We can use  slanted Heyting algebras  to  formalize conditional obligations with  different levels of priority; for instance,  consider the propositions `you are a doctor' ($doctor$), `you visit patients according to their order of arrival' ($order$), and `you save lives' ($save$).  The  fact that saving lives has higher priority  for a doctor than following the order of arrival of patients can be formalized by requiring that the obligation $doctor \prec save$   hold under any context in which $doctor \prec order$   holds. That is, for any context $c$, if  $c \wedge doctor \prec order$, then  $c \wedge doctor \prec save$, which is equivalent to the inequality  $doctor \sip order \leq doctor \sip save$\footnote{If we want to explicitly model that there is a context $c$ for  which  $c \wedge doctor \prec save$ holds, but  $c \wedge doctor \prec order $ does not hold, then $\leq$ can be replaced by  $<$.}.

 Finally,  the operator $\sip$ allows us to formalize an infinite set of conditional obligations symbolically. For instance,
in the context of  the functioning of an autonomous vehicle, for any  $ 0 < x \in \mathbb{R}$, consider the obligation $speed_x \wedge obst \prec brake$, which reads `if your speed is  greater than or equal to  $x$ kmph and there is an obstacle in front of you, then you must  brake'. Then, the conditional obligation  `If there is an obstacle in front and your speed is {\em strictly greater than} $0$ kmph, then you must brake' is captured by the infinite set of obligations  $\{ speed_x\wedge obst \prec brake \mid 0 < x \in \mathbb{R}\}$. This can be represented equivalently by the identity  $obst \sip brake =Positive$, where $obst \sip brake =\bigvee\{speed_x\mid 0<x\in \mathbb{R}\}$, and  $Positive$ stands for `speed of vehicle is positive'. Note that by requiring the identity above, we do not require the infinite join $\bigvee \{speed_x \mid 0 <  x\} $ to be a proposition in our normative system (i.e.~an element of the distributive lattice $A$);  however, this join can be represented in terms of propositions $obst$ and $brake$ using 
 the operator $\sip$.

\section{Conclusions and future directions}\label{sec:conclusions}

This paper introduces a framework for modeling  normative  relationships flexibly, where the satisfaction of an obligation  depends on the specific context. The addition of slanted implications  enables to model flexible dependencies that reflect real-world situations in which obligations and permissions may change based on context, capacity, and other factors. 

This work suggests further directions for future research. Firstly, the current approach can be extended to permission systems \cite{Makinson03}: it would be interesting to explore how to model contextual dependencies not only on norms but also on permissions, and on the interaction between norms and permissions. Secondly,  changing the propositional base, and hence  studying these `normative implications' on (co-)Heyting algebras, Boolean algebras, and modal (e.g.~epistemic, temporal) algebras could be valuable. Finally, in Footnote \ref{footnote}, we briefly mentioned the possibility of introducing generalized implications associated with relations $\prec$ with weaker properties than subordination relations. Exploring these settings is another interesting direction. 

\Appendix 
\section{More examples of correspondence and inverse correspondence}
In section, we collect some more examples of correspondence and inverse correspondence on slanted Heyting and co-Heyting algebras.

The following axiom is the  normative counterpart of the weak excluded middle:

  {{\centering
           \begin{tabular}{cll}
                & $\top\leq \neg a \vee \neg \neg a$ \\
        iff  & $ k\leq \neg a\implies \top \leq \neg a \vee \neg k$&denseness\\
        iff  & $ c\leq \neg a\implies \top \leq \neg a \vee \neg c$& compactness\\
        iff  & $ c\leq \neg a \implies \exists d\exists e (\top \leq d\vee e\ \&\ d\leq \neg a\ \&\ e\leq \neg c)$& compactness\\
        iff  & $ c\wedge a\prec \bot \implies \exists d\exists e (\top \leq d\vee e\ \&\ d\wedge a\prec \bot \ \&\ e\wedge c\prec\bot)$& lemma \ref{lem:negation properties}(i)\\
           \end{tabular}
 \par
 }}

 \noindent This condition can be interpreted as, for any propositions $a$ and $c$, if they give normative inconsistency together, then there exist propositions $d$ and $e$ s.t. $d$ or $e$ is always the case, and both  $d$ and $a$ and $e$ and $c$ together give normative inconsistencies.  

The following axiom is the  normative counterpart of the Kreisel-Putnam axiom:

{\small
{{\centering
           \begin{tabular}{cll}
                &  $\scn a \sip (b \vee c)\leq  (\scn a\sip b)\vee ( \scn a\sip c)$\\
            iff & $d\leq \scn a \sip (b \vee c)\implies d\leq  (\scn a\sip b)\vee ( \scn a\sip c)$&Prop. \ref{prop:background can-ext}(ii)\\
            iff & $d\leq \scn a \sip (b \vee c)\implies \exists e\exists f (d\leq e\vee f\ \&\ e\leq (\scn a\sip b)\ \&\ f\leq ( \scn a\sip c))$&compactness\\
            iff & $d\leq g \sip (b \vee c)\ \&\  \sim a\leq g \implies \exists e\exists f\exists h \exists i (d\leq e\vee f\ \&\ e\leq h\sip b$&\\
            & \&\ $\sim a\leq h\ \&\ f\leq  i \sip c\ \&\  \sim a\leq i)$&lemma \ref{lemma:sip and prec}(ii)\\
            iff & $d\leq g \sip (b \vee c)\ \&\  \top\prec g\vee a \implies \exists e\exists f\exists h \exists i (d\leq e\vee f$&\\
            &\&\ $e\leq h\sip b\ \&\  \top\prec a\vee h\ \&\ f\leq  i \sip c\ \&\  \top\prec a\vee i)$&lemma \ref{lem:negation properties}(i)\\
            iff & $d\wedge g \prec b \vee c\ \&\  \top\prec g\vee a \implies \exists e\exists f\exists h \exists i (d\leq e\vee f$&\\
            &\&\ $e\wedge h\prec b\ \&\  \top\prec a\vee h\ \&\ f\wedge  i \prec c\ \&\  \top\prec a\vee i)$&Prop. \ref{prop:back and forth ast}(i)\\
           \end{tabular}
\par
}}
}
\noindent This condition can be interpreted as, 
 if for all $a$, $b$, $c$ and $d$, some  $g$ exists s.t. $g$  or $a$ is an unconditional obligation and $d$ and $g$ together normatively imply $b$ or $c$, then there exist $e$, $f$,$h$, and $i$ s.t. $d$ implies $e $ or $f$, $a$ or $h$ and $a$ or $i$ are both  unconditional obligations, and $e$ and $h$ together  normatively imply $b$ and $f$ and $i$ together  normatively imply $c$. 
 
\bibliographystyle{deon16}
\bibliography{ref}

\begin{thebibliography}{10}
\expandafter\ifx\csname url\endcsname\relax
  \def\url#1{\texttt{#1}}\fi
\expandafter\ifx\csname urlprefix\endcsname\relax\def\urlprefix{URL }\fi
\newcommand{\enquote}[1]{``#1''}

\bibitem{blackburn2001modal}
Blackburn, P., M.~De~Rijke and Y.~Venema, \emph{Modal logic}, Cambridge University Press \textbf{53} (2001).

\bibitem{calomino2023study}
Calomino, I., J.~Castro, S.~Celani and L.~Valenzuela, \emph{A study on some classes of distributive lattices with a generalized implication}, Order  (2023), pp.~1--21.

\bibitem{castro2011distributive}
Castro, J.~E., S.~A. Celani and R.~Jansana, \emph{Distributive lattices with a generalized implication: Topological duality}, Order \textbf{28} (2011), pp.~227--249.

\bibitem{celani2001quasi}
Celani, S., \emph{Quasi-modal algebras}, Mathematica Bohemica \textbf{126} (2001), pp.~721--736.

\bibitem{conradie2022unified}
Conradie, W., A.~De~Domenico, G.~Greco, A.~Palmigiano, M.~Panettiere and A.~Tzimoulis, \emph{Unified inverse correspondence for {DLE}-logics}, arXiv preprint arXiv:2203.09199  (2022).

\bibitem{wollic22}
De~Domenico, A., A.~Farjami, K.~Manoorkar, A.~Palmigiano, M.~Panettiere and X.~Wang, \emph{Subordination algebras as semantic environment of input/output logic}, in: \emph{International Workshop on Logic, Language, Information, and Computation}, Springer, 2022, pp. 326--343.

\bibitem{de2024correspondence}
De~Domenico, A., A.~Farjami, K.~Manoorkar, A.~Palmigiano, M.~Panettiere and X.~Wang, \emph{Correspondence and inverse correspondence for input/output logic and region-based theories of space}, arXiv preprint arXiv:2412.01722  (2024).

\bibitem{de2024obligations2}
De~Domenico, A., A.~Farjami, K.~Manoorkar, A.~Palmigiano, M.~Panettiere and X.~Wang, \emph{Obligations and permissions, algebraically}, arXiv preprint arXiv:2403.03148  (2024).

\bibitem{de2024obligations}
De~Domenico, A., A.~Farjami, K.~Manoorkar, A.~Palmigiano, M.~Panettiere and X.~Wang, \emph{Obligations and permissions on selfextensional logics}, arXiv preprint arXiv:2402.07336  (2024).

\bibitem{de2021slanted}
De~Rudder, L. and A.~Palmigiano, \emph{Slanted canonicity of analytic inductive inequalities}, ACM Transactions on Computational Logic (TOCL) \textbf{22} (2021), pp.~1--41.

\bibitem{DuGePa05}
Dunn, J.~M., M.~Gehrke and A.~Palmigiano, \emph{Canonical extensions and relational completeness of some substructural logics}, J. Symb. Log. \textbf{70} (2005), pp.~713--740.
\newline\urlprefix\url{https://doi.org/10.2178/jsl/1122038911}

\bibitem{gehrke2001bounded}
Gehrke, M. and J.~Harding, \emph{Bounded lattice expansions}, Journal of Algebra \textbf{238} (2001), pp.~345--371.

\bibitem{kracht1999tools}
Kracht, M., \emph{Tools and techniques in modal logic}, Elsevier Amsterdam \textbf{142} (1999).

\bibitem{Makinson00}
Makinson, D. and L.~van~der Torre, \emph{Input/output logics}, Journal of Philosophical Logic \textbf{29} (2000), pp.~383--408.

\bibitem{Makinson03}
Makinson, D. and L.~van~der Torre, \emph{Permission from an input/output perspective}, Journal of Philosophical Logic \textbf{32} (2003), pp.~391--416.

\bibitem{palmigiano2024unified}
Palmigiano, A. and M.~Panettiere, \emph{Unified inverse correspondence for {LE}-logics}, arXiv preprint arXiv:2405.01262  (2024).

\end{thebibliography}

\end{document}